\def\@journalname{Preprint}%{Discrete Mathematical Chemistry}
\def\@issn{ }
\def\@journalnumber{}%{(for review)}
\DeclareMathOperator{\ch}{ch}
\DeclareMathOperator{\val}{val}
\DeclareMathOperator{\sign}{sgn}
\newcommand{\R}{\mathbb R}
\definecolor{orange}{rgb}{1,0.5,0}
\newcommand{\ue}{ü} 
\begin{document}

\begin{frontmatter}

\titledata{Every atom-atom map can be explained by electron pushing
  diagrams}

\authordata{Christoph Flamm}{Department of Theoretical Chemistry,
  University of Vienna, W{\"a}hringerstra{\ss}e 17, A-1090 Wien,
  Austria}{xtof@tbi.univie.ac.at}{}

\authordata{Stefan M{\ue}ller}{Faculty of Mathematics,
  University of Vienna, Oskar-Morgenstern-Platz 1, A-1090 Wien,
  Austria}{st.mueller@univie.ac.at}{}
        
\authordata{Peter F.\ Stadler}{Bioinformatics Group,
  Department of Computer Science \&
  Interdisciplinary Center for Bioinformatics \&
  Center for Scalable Data Analytics and Artificial Intelligence 
  Dresden/Leipzig \& 
  School of Embedded Composite Artificial Intelligence,
  Leipzig University, H{\"a}rtelstra{\ss}e 16–18,
  D-04107 Leipzig, Germany;
  Max Planck Institute for Mathematics in the Sciences,
  Inselstra{\ss}e 22, D-04103 Leipzig, Germany;
  Department of Theoretical Chemistry,
  University of Vienna, W{\"a}hringerstra{\ss}e 17,
  A-1090 Wien, Austria;
  Facultad de Ciencias, Universidad National de Colombia;
  Bogot{\'a}, Colombia;
  Santa Fe Institute, 1399 Hyde Park Rd., Santa Fe 
  NM 87501, USA}{studla@bioinf.uni-leipzig.de}{}

\keywords{Chemical Reaction Networks;
  Graph Transformations;
  Reaction Mechanisms;}
%  Complex Reactions;

\msc{ }

\begin{abstract}
  Chemical reactions can be understood as transformations of multigraphs
  (molecules) that preserve vertex labels (atoms) and degrees (sums of
  bonding and non-bonding electrons), thereby implying the atom-atom map of
  a reaction.  The corresponding reaction mechanism is often described by
  an electron pushing diagram that explains the transformation by
  consecutive local relocations of invidudal edges (electron pairs). Here,
  we show that every degree-preserving map between multigraphs, and thus
  every atom-atom map, can be generated by cyclic electron
  pushing. Moreover, it is always possible to decompose such an explanation
  into electron pushing diagrams involving only four electron pairs. This
  in turn implies that every reaction can be decomposed into a sequence of
  elementary reactions that involve at most two educt molecules and two
  product molecules. Hence, the requirement of a mechanistic explantion in
  terms of electron pushing and small imaginary transition states does not
  impose a combinatorial constraint on the feasibility of hypothetical
  chemical reactions.
\end{abstract}

\end{frontmatter}

\section{Introduction}

Chemical reaction networks can be viewed abstractly as directed hypergraphs
with compounds as vertices and reactions as directed hyperedges. However,
not every directed hypergraph has a chemical interpretation. In particular,
the conservation of atoms and thus mass implies non-trivial constraints.
Most importantly, they ensure the existence of atom-atom maps that
guarantee that reactions can be written as re-assignments of bonds
\cite{Mueller:22a}. Here, we ask whether there are constraints on
chemically feasibly atom-atom maps that derive from the concept of electron
pushing diagrams, a low-level mechanistic description of chemical reactions
as stepwise local relocations of electron pairs. We shall see that this is
not the case: at least mathematically, every atom-atom map admits such a
low-level mechanistic explanation.

A structural formula represents a chemical species as a (connected) graph,
whose vertices are labeled by atom types and edges refer to chemical
bonds. \emph{Lewis formulas} \cite{Lewis:1916} are equivalent to
vertex-labeled multigraphs in which each bonding electron pair is
represented as an individual edge, and each non-bonding electron pair as a
loop. The electron pair of a bond is considered to be divided up between
the two atoms that it connects, while a non-bonding pair is localized at a
single atom. This representation of molecules agrees with the matrix
formalism of Dugundji \& Ugi \cite{Dugundji:73}.  Since the number of
electrons in the outer shell is usually preserved, the atom type defines
the degree of a vertex in the multigraph, matching Frankland's
``atomicity'' and conforming to the IUPAC term ``valency''
\cite{Muller:94}.

When molecules are modeled as Lewis formulas, chemical reaction mechanisms
can be described as \emph{electron pushing diagrams} (EPD)
\cite{Kermack:1922}. Their simplest form describes the movement of
electrons in terms of stepwise local movements of electron pairs. In terms
of conventional chemical notation, this amounts to considering only
``curved arrows'' in the conventional chemical notation
\cite{Alvarez:11}. The elementary step in an EPD is the replacement of an
electron pair (edge) $xy$ by an electron pair (edge) $yz$. Of course, we
require $x\ne z$ since otherwise no change in the molecule would have
incurred. However, we may have either $x=y$ or $y=z$, corresponding to
transitions that convert a non-bonding to a bonding electron pair or
\textit{vice versa}. Since this operation increases the degree at $z$ and
(decreases the degree at $x$), an electron pair at $z$ needs to move in the
next step in order to re-establish valency at $z$. This process progresses
until the degree deficit at $x$ is eventually compensated, see
Fig.~\ref{fig:DA}. The reaction therefore can be understood as a cyclic
sequence of alternating steps of deleting and inserting consecutive edges
in the molecular graph. By construction, the application of an EPD
preserves the number of electrons at each atom, i.e., the vertex degree.

\begin{figure}
  \begin{tabular}{cc}
    \begin{minipage}{0.55\textwidth}
      %% !!!! run ./standaloneFig.sh figDielsAlder to get 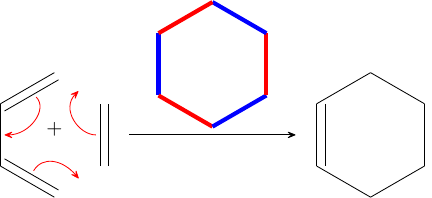 !!!!
      \includegraphics[width=\linewidth]{figDielsAlder.pdf}
    \end{minipage}
    &
    \begin{minipage}{0.40\textwidth}
      \caption{Electron pushing representation of a Diels-Alder reaction.
        Above the reaction arrow, the difference or Imaginary Transition
        State (ITS) graph $\Delta$ is shown with blue edges for
        $\sigma(e)=+1$ and red edges for $\sigma(e)=-1$. Tails and heads of
        electron pushing arrows translate to red and blue edges in the ITS,
        respectively.}
      \label{fig:DA}
    \end{minipage}    
  \end{tabular}
\end{figure} 

In a non-mechanistic setting, chemical reactions are modeled in terms of
their atom-atom maps (AAM), i.e., the one-to-one correspondence of atoms in
the educt (reactant) multigraph and the product multigraph. The AAM
implicitly describes the net change of bonds between educts and
products. In the terminology of chemical reaction network theory, the educt
and product graphs $G$ and $H$ correspond to \emph{complexes}
\cite{Horn:72a}, and their connected components are the educt and product
molecules. AAMs thus are fine-grained descriptions of the (directed) edges
in the complex-reaction graph. The net effect of a chemical reaction is
captured by a ``difference multigraph'' in which %new bonds and changes of
bond orders are recorded. In essence, this graph is a graph-theoretic
version of the R-matrix in the formalism of Dugundji and Ugi
\cite{Dugundji:73} and can be seen as variation of the Imaginary Transition
State (ITS) graph \cite{Fujita:86} and Condensed Graph of a Reaction (CGR)
\cite{Hoonakker:11}, from which it differs only by the use of signed
multiple edges instead of specific edge-labels.

The application of an EPD to an educt multigraph $G$ produces a multigraph
$H$ on the same vertex set $V$, and the corresponding AAM is the identity
on $V=V(G)=V(H)$.  Since every push (movement of an electron pair) is
eventually compensated, the EPD model of a reaction preseve the degree
(valency) at each vertex. In this contribution we ask whether the converse
is also true: Is there always a sequence of EPDs that explains a given
degree-preserving AAM? Indeed, we shall show that the difference multigraph
of a degree-preserving AAM can always be explained by a set of disjoint
EPDs.

\section{Mathematical Preliminaries}

Let $G=(V,E)$ be a finite undirected multigraph and denote by
$\mathfrak{C}(G)$ its connected components. We consider the elements of $E$
as distinct; nevertheless, we will write ``an edge $xy$'' to mean an
``element $e\in E$ incident to $x,y\in V$''. For a pair of vertices $x,y\in
V$ we define $m(x,y)=m(y,x)$ as the number of edges connecting $x$ and
$y$. In particular, $m(x,y)=0$ if $x$ and $y$ are not adjacent. Similarly,
$m(x,x)$ denotes the multiplicity of loops at vertex $x$. Thus $m(x,x)=0$
if there is not loop at $x\in V$.  A simple graph is a multigraph without
loops and multiple edges, i.e., $m(x,x)=0$ and $m(x,y)\in\{0,1\}$ for all
$x\ne y$. Following the usual convention for multigraphs, the degree of a
vertex is defined as
\begin{equation}
  \deg(x) \coloneqq  \sum_{y\in V\setminus\{x\}} m(x,y) + 2 m(x,x) .
\end{equation}
Loops count twice because both of their endpoints are incident with $x$.
Chemically, we can interpret $m(x,y)$ with $x\ne y$ as the bond order,
while $m(x,x)$ denotes the number of non-bonding electron pairs at
$x$. Thus, the degree $\deg(x)$ describes the number of electrons at an
atom, which will consider to be a constant.

Lewis formulas have vertex labels designating the atom type and atom-atom
maps (AAM) are defined as label-preserving bijections $\alpha\colon V(G)\to
V(H)$ between two Lewis formulas $G$ and $H$.  For the following
mathematical results, however, the labels themselves are not relevant. We
will only make use of the fact that the total number of electrons in the
outer shell of atom is consider to be fixed and thus reactions preserve the
vertex degree. Instead of atom-atom maps, we therefore consider
degree-preserving bijections $\alpha\colon V(G)\to V(H)$ such that
$\deg_H(\alpha(x))=\deg_G(x)$ for all $x\in V(G)=V(H)$.

Our key construction is a graph-theoretical reformulation of the R-matrix
in the theory of Dugundji \& Ugi \cite{Dugundji:73}:
\begin{definition}
  Let $G$ and $H$ be multigraphs and $\alpha\colon V(G)\to V(H)$ be a
  degree-preserving bijection.  Moreover, for vertices $x$ and $y$, let
  $\delta(x,y)= m_H(\alpha(x),\alpha(y))-m_G(x,y)$, $m(x,y)\coloneqq
  |\delta(x,y)|$, and $\sigma(x,y)\coloneqq \sign(\delta(x,y))$, where
  $\sign\colon \R \to \{-1,0,+1\}$ is the sign function.  Then, the
  \emph{difference multigraph} $\Delta(G,H,\alpha)$ has vertex set
  $V=V(G)=V(H)$ and $m(x,y)\coloneqq |\delta(x,y)|$ edges connecting $x$
  and $y$.  An edge $xy$ is labeled by $\sigma(x,y)$.
\end{definition}
In particular, $\Delta(G,H,\alpha)$ contains no edge between $x$ and $y$ if
the multiplicity of the edges between two vertices $x$ and $y$ in $G$
coincides with the multiplicity of the edges between the images $\alpha(x)$
and $\alpha(y)$ in $H$. The difference multigraph describes the net change
of bond orders that result from the chemical reaction $\mathfrak{C}(G)\to
\mathfrak{C}(H)$. The multiplicities in $G$, $H$, and $\Delta\coloneqq
\Delta(G,H,\alpha)$ satisfy
\begin{equation}
  m_H(\alpha(x),\alpha(y))-m_G(x,y) =
  \sigma_{\Delta}(x,y) \cdot m_{\Delta}(x,y) ,
\end{equation}
for all $x,y$.  We note that setting $B_{xy}\coloneqq m_G(x,y)$,
$E_{xy}\coloneqq m_H(\alpha(x),\alpha(y))$, and $R_{xy}\coloneqq
\sigma_{\Delta}(x,y) \cdot m_{\Delta}(x,y)$ recovers the representation of
a reaction as $B+R=E$ in \cite{Dugundji:73}.

In the following, it will be useful to count the increasing and decreasing
bond orders separately. To this end we introduce
\begin{equation}
  d_+'(x) \coloneqq \sum_{\substack{y\ne x\\\sigma(x,y)=+1}} m_{\Delta}(x,y)
  \quad \text{ and } \quad 
  d_-'(x) \coloneqq \sum_{\substack{y\ne x\\\sigma(x,y)=-1}} m_{\Delta}(x,y) .
\end{equation}
  To
account also for the loops, we introduce
\begin{equation} \label{eq:dplusminus}
  \begin{split} 
  d_+(x) &\coloneqq 
  \begin{cases} d_+'(x)       & \text{ if } \sigma_{\Delta}(x,x)\le 0 \\
    d_+'(x) + 2m_{\Delta}(x,x) & \text{ if } \sigma_{\Delta}(x,x)>0 ,
  \end{cases}
  \\
  d_-(x) &\coloneqq 
  \begin{cases} d_-'(x)       &\text{ if }\sigma_{\Delta}(x,x)\ge 0 \\
    d_-'(x) + 2m_{\Delta}(x,x) &\text{ if }\sigma_{\Delta}(x,x)<0 . 
  \end{cases}
  \end{split}
\end{equation}
We first state a simple property of difference graphs:
\begin{lemma}
  Every difference graph $\Delta$ satisfies 
  $d_+(x)=d_-(x)$ for all $x\in V$, and hence
  $\deg_{\Delta}(x)$ is always even.
\end{lemma}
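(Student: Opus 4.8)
The plan is to derive everything from the single hypothesis that $\alpha$ is degree-preserving. First I would expand the identity $\deg_G(x)=\deg_H(\alpha(x))$ using the degree formula on both sides. Because $\alpha$ is a bijection of $V=V(G)=V(H)$, the sum $\sum_{y\ne x} m_H(\alpha(x),\alpha(y))$ ranges over exactly the vertices $z=\alpha(y)\ne\alpha(x)$, so it equals the full neighbour sum of $\alpha(x)$ in $H$; subtracting $\deg_G(x)$ from $\deg_H(\alpha(x))$ and recalling $\delta(x,y)=m_H(\alpha(x),\alpha(y))-m_G(x,y)$ then yields
\begin{equation*}
  0 = \sum_{y\ne x}\delta(x,y) + 2\,\delta(x,x) .
\end{equation*}

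Next I would substitute $\delta(x,y)=\sigma(x,y)\,m_{\Delta}(x,y)$ and split the sum over $y\ne x$ by the sign $\sigma(x,y)$; the terms with $\sigma(x,y)=0$ drop out since $m_{\Delta}(x,y)=|\delta(x,y)|=0$ there. What remains is
\begin{equation*}
  d_+'(x) - d_-'(x) + 2\,\sigma_{\Delta}(x,x)\,m_{\Delta}(x,x) = 0 .
\end{equation*}
Finally I would run a three-way case distinction on $\sign\sigma_{\Delta}(x,x)$. If $\sigma_{\Delta}(x,x)>0$, the loop term $+2m_{\Delta}(x,x)$ is precisely the correction that \eqref{eq:dplusminus} folds into $d_+(x)$, so the identity becomes $d_+(x)=d_-(x)$; if $\sigma_{\Delta}(x,x)<0$, the term $-2m_{\Delta}(x,x)$ is absorbed into $d_-(x)$, again giving $d_+(x)=d_-(x)$; and if $\sigma_{\Delta}(x,x)=0$, then $m_{\Delta}(x,x)=0$, $d_\pm(x)=d_\pm'(x)$, and the identity reads $d_+(x)=d_-(x)$ directly.

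For the parity claim, observe that $\deg_{\Delta}(x)=\sum_{y\ne x}m_{\Delta}(x,y)+2m_{\Delta}(x,x)=d_+'(x)+d_-'(x)+2m_{\Delta}(x,x)$, and the same case check shows this always equals $d_+(x)+d_-(x)$, whence $\deg_{\Delta}(x)=2d_+(x)$ is even. I do not anticipate a genuine obstacle: the argument is bookkeeping, and the one delicate point is the asymmetric role of a loop, which adds $2$ to a vertex degree but appears only once as an edge in $\Delta$ — exactly the situation the piecewise definition \eqref{eq:dplusminus} of $d_\pm$ was designed to absorb.
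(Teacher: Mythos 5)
Your proposal is correct and follows essentially the same route as the paper: expand the degree-preservation identity, split the neighbour sum by sign to get $d_+'(x)-d_-'(x)+2\sigma_{\Delta}(x,x)\,m_{\Delta}(x,x)=0$, and absorb the loop term into $d_+$ or $d_-$ by a case analysis. The only cosmetic difference is that you case-split on the sign of $\sigma_{\Delta}(x,x)$ while the paper splits on the comparison of $d_+'(x)$ and $d_-'(x)$; these are equivalent in view of the identity.
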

\begin{proof}
  Since $\alpha$ is a degree-preserving bijection, we have
  $\deg_G(x)-\deg_H(\alpha(x))=0$ and thus
  \begin{align*}
  0 &= \sum_{y\in V\setminus\{x\}}
  m(\alpha(x),\alpha(y))-m(x,y) + 2 \big( m(\alpha(x),\alpha(x))-m(x,x) \big) \\ 
  &=  \sum_{y\in V\setminus\{x\}} \sigma_{\Delta}(x,y) \cdot m_{\Delta}(x,y) +
  2\sigma_{\Delta}(x,x) \cdot m_{\Delta}(x,x) \\
  &= d'_+(x)-d'_-(x) + 2\sigma_{\Delta}(x,x) \cdot m_{\Delta}(x,x) ,
  \end{align*}
  where we separate the positive and negative edges in the sum.  If
  $d'_+(x)>d'_-(x)$, then $\sigma_{\Delta}(x,x)=-1$ and further
  $d_+(x)=d_+'(x)$ and $d_-(x)=d_-'(x)+2m_{\Delta}(x,x)$, which yields
  $d_+(x)=d_-(x)$. An analogous argument applies if $d'_+(x)<d'_-(x)$.
  Finally, if $d'_+(x)=d'_-(x)$, then $m_{\Delta}(x,x)=0$. Taken together,
  we always have $d_+(x)=d_-(x)$. Moreover, $\deg_{\Delta}(x)=
  d_+(x)+d_-(x) = 2d_+(x)$, and hence $\deg_{\Delta}(x)$ is even.
\end{proof}

We define a \emph{walk} on a multigraph as a sequence of distinct edges
such that any two successive edges in the sequence share a vertex. A walk
on a multigraph with sign function $\sigma\colon E\to\{+1,-1\}$ is
\emph{alternating} if consecutive edges have opposite signs, i.e.,
$\sigma(e_k) \cdot \sigma(e_{k+1})=-1$ for all~$k$ (mod $n$).

\section{Existence of Alternating Closed Walks}

In order to investigate the difference multigraph in some more detail, it
will be useful to consider an equivalent simple graph.
\begin{definition}
  \label{def:auxgraph}
  For every signed multigraph $\Delta$ denote by $A(\Delta)$ the simple
  graph obtained by subdiving each edge of $\Delta$ by the insertion of two
  \emph{subdivision vertices}. All other vertices of $A(\Delta)$ will be a
  referred to as $\Delta$-\emph{vertices}. The sign $\sigma(e')$ of an edge
  in $A(\Delta)$ that is incident to a $\Delta$-vertex equals the sign
  $\sigma(e)$ of the edge from which $e'$ derives by subdivision. The sign
  $\sigma(e'')$ of an edge $e''$ connecting two subdivision vertices is the
  opposite of the edge from which it derives, i.e.,
  $\sigma(e'')=-\sigma(e)$.
\end{definition}
In other words, each edge $e$ in $\Delta$ is replaced by a path $P_3(e)$ of
length~$3$ in $A(\Delta)$. Moreover, the two terminal edges of $P_3(e)$ are
labeled with $\sigma(e)$, while the middle edge of $P_3(e)$ is assigned
$-\sigma(e)$, see Fig.~\ref{fig:P3} for an illustration.

\begin{figure}
  \begin{tabular}{cc}
    \begin{minipage}{.5\textwidth}
      \includegraphics[width=\linewidth]{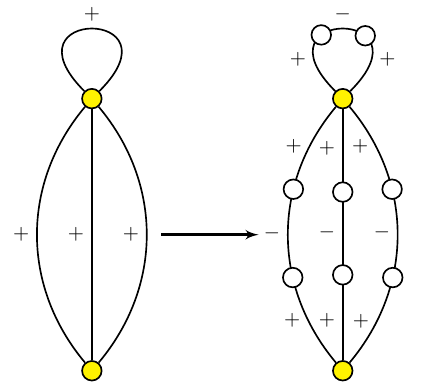}
    \end{minipage}
    &
    \begin{minipage}{.47\textwidth}
      \caption{Transformation of a signed multigraph $\Delta$ into a signed
        auxiliary graph $A(\Delta)$. Every edge of $\Delta$ is replaced by a
        a signed path $P_3$. At each $\Delta$-vertex, the number of
        positive and the number of negative edges remains unchanged.}
      \label{fig:P3}
    \end{minipage}
  \end{tabular}
\end{figure}

\begin{lemma}
  \label{lem:auxgraphwalk}
  Let $\Delta$ be a signed multigraph. If $W^*$ is a closed walk in
  $A(\Delta)$ then there is uniquely defined closed walk $W$ in $\Delta$
  such that $W^*$ is obtained from $W$ by replacing each edge $e\in W$ by
  the path $P_3(e)$. Moreover, if $W^*$ is alternating, then $W$ is also
  alternating.
\end{lemma}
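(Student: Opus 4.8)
The plan is to exploit that the two subdivision vertices inserted into each edge $e$ of $\Delta$ have degree exactly $2$ in $A(\Delta)$, being incident only to the two edges of the path $P_3(e)$ that meet them. First I would show that whenever a closed walk $W^*$ in $A(\Delta)$ uses one edge of some $P_3(e)$, it must use all three of them: a closed walk that visits a degree-$2$ vertex enters along one of its two incident edges and leaves along the other, so using an edge incident to a subdivision vertex of $e$ forces the walk to use the other edge at that vertex, and iterating across both subdivision vertices of $e$ shows that all of $P_3(e)$ is traversed, in one of the two orders corresponding to a traversal from one $\Delta$-endpoint of $e$ to the other. Because the edges of $W^*$ are distinct and the paths $P_3(e)$ for distinct $e$ are pairwise edge-disjoint, no $P_3(e)$ is traversed twice, and its three edges occur as a contiguous block of $W^*$. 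Hence $W^*$ is, cyclically, a concatenation of finitely many such blocks.

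Next I would contract each block $P_3(e)$ back to the single edge $e$ of $\Delta$, obtaining a cyclic sequence $W$ of distinct edges of $\Delta$. Two blocks that are consecutive in $W^*$ share the $\Delta$-vertex that is the common endpoint of the two corresponding $P_3$-paths, so the corresponding edges of $W$ share that vertex; therefore $W$ is a closed walk in $\Delta$, and replacing each $e\in W$ by $P_3(e)$ recovers $W^*$ by construction. Uniqueness is immediate, since the decomposition of $W^*$ into $P_3$-blocks is forced by the preceding argument.

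For the ``moreover'' claim, assume $W^*$ alternates. Within a single block the three edges of $P_3(e)$ carry signs $\sigma(e),-\sigma(e),\sigma(e)$ (the two terminal edges are incident to $\Delta$-vertices and the middle edge joins the two subdivision vertices), so alternation along a block is automatic. The decisive point is the junction of two consecutive blocks $P_3(e)$ and $P_3(f)$: the last edge of the first block and the first edge of the second block are terminal edges of $P_3(e)$ and $P_3(f)$, hence carry signs $\sigma(e)$ and $\sigma(f)$; being consecutive in the alternating walk $W^*$, they satisfy $\sigma(e)\cdot\sigma(f)=-1$. Applying this to every pair of cyclically consecutive blocks, including the wrap-around from the last block to the first, shows that $W$ is alternating. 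I expect the only mildly delicate point to be the bookkeeping in the block-decomposition step, especially ruling out that $W^*$ could enter the interior of some $P_3(e)$ without traversing all of it and handling the degenerate case in which $e$ is a loop of $\Delta$ (so $P_3(e)$ is itself a closed walk of length $3$); but none of this is deep once the degree-$2$ observation is available.
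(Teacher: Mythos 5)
Your proposal is correct and follows essentially the same route as the paper: the degree-$2$ property of subdivision vertices forces each $P_3(e)$ to be traversed entirely or not at all, contraction of these blocks yields the unique walk $W$, and alternation is checked separately inside each $P_3(e)$ and at the $\Delta$-vertices joining consecutive blocks. Your version merely spells out the contiguous-block structure and the loop case a little more explicitly than the paper does.
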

\begin{proof}
  First we note that the edge set of $A(\Delta)$ can be partioned into the
  edge sets of $P_3(e)$ for $e\in E(\Delta)$. Furthermore, since the
  subdivision vertices have degree two in $A(\Delta)$, we have either
  $P_3(e)\subseteq W^*$ or $P_3(e)\cap W^*=\emptyset$ (with sequences
  treated as sets). Since $P_3(e)$ is incident with $\Delta$-vertex $x$ in
  $A(\Delta)$ if and only if $e$ is incident with $x$ in $\Delta$, every
  closed walk in $A(\Delta)$ corresponds to a closed walk in $\Delta$ by
  contracting $P_3(e)$ to the single edge~$e$.

  Consider two consecutive edges in $W^*$. By assumption, they have
  opposite sign. If they lie in the same $P_3(e)$, then one of them is the
  middle edge of $P_3(e)$. Otherwise, they lie in two different paths
  $P_3$, say $e'_1$ in $P_3(e_1)$ and $e'_2$ in $P_3(e_2)$, and the vertex
  $x$ between them is a $\Delta$-vertex. Thus, $\sigma(e'_1)=\sigma(e_1)$
  and $\sigma(e'_2)=\sigma(e_2)$, and $\sigma(e_1)=-\sigma(e_2)$ implies
  $\sigma(e'_1)=-\sigma(e'_2)$.  That is, $W$ is an alternating closed walk
  in $\Delta$.
\end{proof}
As an immediate consequence, we observe:
\begin{corollary} \label{cor:corr}
  Let $\Delta$ be a signed multigraph and $A(\Delta)$ be the corresponding
  simple graph.  The edge set of $\Delta$ is the disjoint union of the edge
  sets of alternating closed walks on $\Delta$ if and only if the edge set
  of $A(\Delta)$ is the disjoint union of the edge sets of alternating
  closed walks on $A(\Delta)$.
\end{corollary}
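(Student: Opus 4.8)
The plan is to read the corollary off Lemma~\ref{lem:auxgraphwalk}, handling the two implications separately. Throughout I use that a walk is a sequence of \emph{distinct} edges and that, by construction, the edge sets of the paths $P_3(e)$, $e\in E(\Delta)$, partition $E(A(\Delta))$.

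For the ``only if'' direction I would start from a partition of $E(\Delta)$ into alternating closed walks $W_1,\dots,W_k$ and, inside each $W_i$, replace every edge $e$ by the path $P_3(e)$, threading its three edges in the order dictated by the two $\Delta$-vertices that $e$ joins in $W_i$. The resulting edge sequence $W_i^\ast$ is a closed walk in $A(\Delta)$: consecutive paths $P_3(e_j),P_3(e_{j+1})$ meet at the $\Delta$-vertex shared by $e_j$ and $e_{j+1}$, and the edges are pairwise distinct since the $e_j$ are distinct and the subdivision vertices of distinct edges are distinct. It is alternating because inside $P_3(e)$ the three edges carry the signs $\sigma(e),-\sigma(e),\sigma(e)$, while at a $\Delta$-vertex the last edge of $P_3(e_j)$ has sign $\sigma(e_j)$ and the first edge of $P_3(e_{j+1})$ has sign $\sigma(e_{j+1})=-\sigma(e_j)$, using that $W_i$ is alternating. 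Finally, since the $W_i$ partition $E(\Delta)$ and the $P_3(e)$ partition $E(A(\Delta))$, the walks $W_1^\ast,\dots,W_k^\ast$ partition $E(A(\Delta))$.

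For the ``if'' direction I would start from a partition of $E(A(\Delta))$ into alternating closed walks $W_1^\ast,\dots,W_k^\ast$. By the argument in the proof of Lemma~\ref{lem:auxgraphwalk}, each $W_i^\ast$ satisfies $P_3(e)\subseteq W_i^\ast$ or $P_3(e)\cap W_i^\ast=\emptyset$ for every $e$, so contracting each $P_3(e)$ to the single edge $e$ turns $W_i^\ast$ into a well-defined closed walk $W_i$ on $\Delta$, which the lemma guarantees is alternating. That $W_1,\dots,W_k$ partition $E(\Delta)$ follows because the $P_3(e)$ partition $E(A(\Delta))$ and the $W_i^\ast$ partition $E(A(\Delta))$: hence each $P_3(e)$ lies in exactly one $W_i^\ast$, so each $e$ lies in exactly one $W_i$, and there exactly once, since $W_i^\ast$ contains each of the three edges of $P_3(e)$ once.

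I do not anticipate a genuine obstacle: the only delicate point is the bookkeeping of signs and incidences when passing between $\Delta$ and $A(\Delta)$, and for the ``if'' direction this is already encapsulated in Lemma~\ref{lem:auxgraphwalk}. Consequently the part I would write out most carefully is the ``only if'' direction, namely the verification that the naive lift of an alternating closed walk of $\Delta$ is again a closed walk of $A(\Delta)$ with distinct edges and alternating signs, together with the counting argument that the lifts partition $E(A(\Delta))$.
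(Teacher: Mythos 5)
Your proposal is correct and follows the route the paper intends: the corollary is stated there as an immediate consequence of Lemma~\ref{lem:auxgraphwalk} together with the fact that the paths $P_3(e)$ partition $E(A(\Delta))$, which is exactly the machinery you use. Your write-up of the ``only if'' direction (lifting each alternating closed walk of $\Delta$ to $A(\Delta)$ and checking distinctness, alternation at the $\Delta$-vertices, and the partition count) simply makes explicit what the paper leaves to the reader, and it is sound.
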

  
The auxiliary graph $A(\Delta)$ retains the signs of the edges in the
following sense:
\begin{lemma}
  Let $\Delta$ be a signed multigraph and $A(\Delta)$ be the corresponding
  simple graph.  Every $\Delta$-vertex $x$ in $A(\Delta)$ is incident with
  $d_+(x)$ positive edges and $d_-(x)$ negative edges. Moreover, every
  subdivision vertex in $A(\Delta)$ is incident with exactly one positive
  and one negative edge.
\end{lemma}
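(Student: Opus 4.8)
The plan is to argue directly from the construction in Definition~\ref{def:auxgraph}, treating the subdivision vertices and the $\Delta$-vertices separately and making a careful case distinction according to the sign of the loop at a $\Delta$-vertex. Throughout I would use that in a signed multigraph every edge carries sign $+1$ or $-1$ (for difference multigraphs this holds because an edge $xy$ is present only when $m_\Delta(x,y)=|\delta(x,y)|>0$).

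First I would dispatch the subdivision vertices. Every edge $e$ of $\Delta$ is replaced by the path $P_3(e)$, which introduces exactly two subdivision vertices, and each of them has degree two in $A(\Delta)$: it is incident with one terminal edge of $P_3(e)$, carrying sign $\sigma(e)$, and with the middle edge of $P_3(e)$, carrying sign $-\sigma(e)$. Since $\{\sigma(e),-\sigma(e)\}=\{+1,-1\}$, each subdivision vertex sees exactly one positive and one negative edge.

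Next I would count the edges of $A(\Delta)$ incident with a $\Delta$-vertex $x$. These are precisely the terminal edges, on the $x$-side, of the paths $P_3(e)$ for edges $e$ of $\Delta$ incident with $x$. A non-loop edge $e=xy$ contributes exactly one such edge, of sign $\sigma(x,y)$; summing over all non-loop edges at $x$ yields $d_+'(x)$ positive and $d_-'(x)$ negative edges by the definitions of $d_\pm'$. A loop $e$ at $x$, on the other hand, is subdivided into a triangle through $x$ and its two subdivision vertices, so \emph{both} terminal edges of $P_3(e)$ are incident with $x$, each carrying the sign $\sigma_\Delta(x,x)$. Hence the $m_\Delta(x,x)$ loops at $x$ contribute $2m_\Delta(x,x)$ edges at $x$, all of the single sign $\sigma_\Delta(x,x)$.

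Finally I would assemble the count by sign. If $\sigma_\Delta(x,x)>0$, the loops add $2m_\Delta(x,x)$ on the positive side only, giving $d_+'(x)+2m_\Delta(x,x)=d_+(x)$ positive and $d_-'(x)=d_-(x)$ negative edges; if $\sigma_\Delta(x,x)<0$ the two sides swap, again matching \eqref{eq:dplusminus}; and if $\sigma_\Delta(x,x)=0$ then $m_\Delta(x,x)=0$, there are no loops at $x$, and the counts are simply $d_+'(x)=d_+(x)$ and $d_-'(x)=d_-(x)$. The only genuine subtlety — and the step I would be most careful about — is the loop case: one must observe that subdividing a loop produces \emph{two} edges at the incident $\Delta$-vertex rather than one, which is exactly why the definitions of $d_+$ and $d_-$ in \eqref{eq:dplusminus} add $2m_\Delta(x,x)$ on the side matching the sign of the loop.
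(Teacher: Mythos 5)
Your proof is correct and follows essentially the same route as the paper's: non-loop edges at a $\Delta$-vertex $x$ contribute one incident edge of unchanged sign, loops contribute two, and the factor $2m_\Delta(x,x)$ in the definition of $d_\pm$ absorbs exactly this doubling. You are somewhat more explicit than the paper about the case distinction on $\sigma_\Delta(x,x)$ and about the (trivial) subdivision-vertex claim, but there is no substantive difference in the argument.
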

\begin{proof}
  By construction, if $e$ is an edge between $x$ and $y\ne x$ in $\Delta$,
  then there is exactly one edge in $P_3(e)$ incident with $x$ in
  $A(\Delta)$, and if $e$ is loop at $x$, then there are exactly two edges
  incident with $x$. In each case, the edges incident with $x$ in
  $A(\Delta)$ have the same sign as those in $\Delta$.  Since every loop in
  $\Delta$ contributes two to $d_+(x)$ resp.\ $d_-(x)$,
  cf.~Eqn.~\eqref{eq:dplusminus}, the assertion follows.
\end{proof}

\begin{corollary}
  Let $G$ and $H$ be multigraphs, $\alpha\colon V(G)\to V(H)$ be a
  degree-preserving bijection, and $\Delta$ be the difference multigraph.
  Every vertex of the corresponding simple graph $A(\Delta)$ is incident to
  as many positive as negative edges.
\end{corollary}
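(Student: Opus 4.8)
The plan is to observe that $A(\Delta)$ has exactly two kinds of vertices --- the $\Delta$-vertices and the subdivision vertices --- and that the balancedness claim has, in effect, already been established for each kind separately by the two lemmas immediately preceding. So this will be a short bookkeeping argument rather than a new construction.

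First I would invoke the first Lemma of the paper: since $\Delta = \Delta(G,H,\alpha)$ is a difference multigraph arising from the degree-preserving bijection $\alpha$, we have $d_+(x) = d_-(x)$ for every $x \in V$. Then I would apply the preceding lemma, which asserts that each $\Delta$-vertex $x$ of $A(\Delta)$ is incident with exactly $d_+(x)$ positive and $d_-(x)$ negative edges; combining these two facts shows that every $\Delta$-vertex is incident to equally many positive and negative edges. Next, the same preceding lemma states directly that every subdivision vertex of $A(\Delta)$ is incident to exactly one positive and one negative edge, which is trivially balanced. Since every vertex of $A(\Delta)$ is of one of these two types, the corollary follows.

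I do not expect a genuine obstacle here: the statement is an immediate corollary obtained by chaining the lemma on $A(\Delta)$ with the first Lemma on difference graphs. The only point worth flagging is where the hypotheses are actually used --- the equality $d_+(x) = d_-(x)$ is exactly the content that relies on $\alpha$ being degree-preserving (via $\deg_G(x) = \deg_H(\alpha(x))$), so that is the substantive ingredient being repackaged here. This corollary is presumably stated because the balanced-edge property of $A(\Delta)$ is what will be fed into a later argument producing the decomposition into alternating closed walks (cf.\ Corollary~\ref{cor:corr}).
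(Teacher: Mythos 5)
Your proof is correct and matches the paper's intent exactly: the corollary is stated without an explicit proof precisely because it follows by chaining the preceding lemma (each $\Delta$-vertex sees $d_+(x)$ positive and $d_-(x)$ negative edges; each subdivision vertex sees one of each) with the first Lemma's identity $d_+(x)=d_-(x)$ for difference multigraphs. Nothing is missing, and your remark about where the degree-preserving hypothesis enters is accurate.
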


In the following we derive a characterization of simple graphs with signed
edges in terms of alternating closed walks that is reminiscent of Euler's
solution of the K{\"o}nigsberg Bridges Problem \cite{Euler:1741}.  We start
with a simple observation: 
\begin{lemma}
  \label{lem:existsW} 
  Let $A$ be a simple graph with non-empty edge set $E$ and edge labels
  $\sigma\colon E\to\{+1,-1\}$ such that every vertex is incident to as
  many positive as negative edges.  Then there exists an alternating closed
  walk in $G$.
\end{lemma}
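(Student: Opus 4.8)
The plan is to mimic the classical Eulerian-trail argument: pick an alternating trail of maximum length and show that it must already close up into an alternating closed walk. An alternating trail exists at all, since a single edge is one; and because the edges of a walk are required to be distinct and $E$ is finite, there is an alternating trail of maximum length, say $W=(v_0,e_1,v_1,\dots,e_k,v_k)$ with $k\ge 1$. Write $s\coloneqq\sigma(e_k)$ and let $w\coloneqq v_k$ be the final vertex. It suffices to prove that $v_0=v_k$ and $\sigma(e_1)=-\sigma(e_k)$, for then the first and last edges of $W$ share the vertex $w$ and carry opposite signs, so $W$ is the desired alternating closed walk (in fact, since $A$ is simple, it is an alternating cycle of length at least three).

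The heart of the argument is a local count at $w$. I would group the edges of $W$ incident to $w$ according to the positions $i$ with $v_i=w$: each internal occurrence $0<i<k$ contributes the consecutive pair $e_i,e_{i+1}$, which by the alternating property have opposite signs; the terminal occurrence $i=k$ contributes $e_k$, of sign $s$; and if $v_0=w$ the initial occurrence contributes $e_1$, of sign $\sigma(e_1)$. Because $A$ is loopless, each edge of $W$ meets $w$ in at most one endpoint, and one checks that these cases partition the edge-ends of $W$ at $w$ with no overlaps. Hence, if $t$ denotes the number of internal occurrences, the number of sign-$(-s)$ edges of $W$ at $w$ is $t$, plus one if ($v_0=w$ and $\sigma(e_1)=-s$); the number of sign-$s$ edges of $W$ at $w$ is $t+1$, plus one more if ($v_0=w$ and $\sigma(e_1)=s$).

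Now compare with $d_+(w)=d_-(w)=\colon d$, the total number of positive (resp.\ negative) edges at $w$ in $A$ (note $d\ge 1$, since $e_k$ is incident to $w$, so $W$ cannot use all of them). If $v_0\ne w$, or if $v_0=w$ but $\sigma(e_1)=s$, then by the count $W$ uses strictly more sign-$s$ edges at $w$ than sign-$(-s)$ edges, hence at most $d-1$ of the sign-$(-s)$ edges; so there is an edge $f$ at $w$ with $\sigma(f)=-s$ not in $W$, and appending $f$ to $W$ yields a longer alternating trail, contradicting maximality. The only remaining case is $v_0=w$ together with $\sigma(e_1)=-s=-\sigma(e_k)$, which is exactly the claim. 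The one point that needs genuine care is the bookkeeping that the occurrences of $w$ along $W$ really account for each edge-end of $W$ at $w$ exactly once; loop-freeness of $A$ is what makes this clean. (As an alternative route, one can form the auxiliary graph on vertex set $E$ with $e\sim f$ whenever $e,f$ share a vertex of $A$ and $\sigma(e)=-\sigma(f)$: it has $\sum_v d_+(v)d_-(v)=\sum_v d_+(v)^2\ge\sum_v d_+(v)=|E|$ edges on $|E|$ vertices, hence contains a cycle, which unwinds to an alternating closed walk in $A$; the degenerate situation where $d_\pm(v)\le 1$ at every vertex is immediate, since then $A$ is a disjoint union of alternating cycles.)
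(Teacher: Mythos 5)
Your main argument is correct and is essentially the paper's own proof recast in extremal form: the paper greedily extends an alternating walk, implicitly using the same sign-parity count at each visited vertex that you make explicit, and handles a wrong-signed return to the start by continuing the walk, whereas you take a maximum alternating trail and show via that count that it must already close up with compatible signs. Only your parenthetical asides are shaky---a maximal closed alternating trail in a simple graph need not be a \emph{cycle} (vertices may repeat, e.g.\ two signed $4$-cycles sharing a vertex), its length is even and at least four rather than ``at least three'', and in the line-graph alternative a cycle need not unwind to a genuine traversal since consecutive adjacencies $e_{i-1}\sim e_i$ and $e_i\sim e_{i+1}$ could both occur at the same endpoint of $e_i$---but none of this is needed for the lemma as stated.
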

\begin{proof}
  We construct an alternating walk in $A$ starting at the vertex $x_0$ with
  a positive edge. At each step, we extend the walk by traversing an edge
  that has not been used before. If a vertex $y\ne x_0$ has been entered
  along an edge $e$ with sign $\sigma(e)$, then by assumption there is a
  previously unused edge $e'$ with $\sigma(e')=-\sigma(e)$ along which the
  walk can leave $y$ again.  Obviously, the walk cannot end at a vertex
  $y\ne x_0$, and since $A$ is finite, the walk eventually encounters $x_0$
  again.  If it enters $x_0$ along a negative edge, we have obtained the
  desired alternating closed walk. If $x_0$ is entered along a positive
  edge, then the walk so far contains two more positive than negative edges
  incident with $x_0$.  Repeating the above construction (starting at $x_0$
  this time with a remaining negative edge), we conclude that the walk will
  eventually return to $x_0$ along a negative edge. Thus, there is always
  an alternating closed walk.
\end{proof}

\begin{theorem}
  \label{thm:alternatingEuler} 
  Let $A$ be a simple graph with non-empty edge set $E$ and edge labels
  $\sigma\colon E\to\{+1,-1\}$.  Then $E$ is the disjoint union of
  alternating closed walks if and only if every vertex is incident to as
  many positive as negative edges.
\end{theorem}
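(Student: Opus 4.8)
The plan is to prove the two implications separately. The forward direction is a local counting argument describing how a single alternating closed walk meets a vertex, and the converse is an induction on $|E|$ powered by Lemma~\ref{lem:existsW}.

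\emph{``Only if''.} Suppose $E$ is the disjoint union of the edge sets of alternating closed walks $W_1,\dots,W_k$. Fix a vertex $x$. Write $W_i$ as a cyclic sequence of distinct edges $e_1,\dots,e_n$ with $\sigma(e_j)\sigma(e_{j+1})=-1$ for all $j$ modulo $n$. Since $A$ is simple, no $e_j$ is a loop, so each $e_j$ incident with $x$ has $x$ as exactly one of its two endpoints, namely as the junction either of $e_{j-1},e_j$ or of $e_j,e_{j+1}$. Hence the junctions of $W_i$ located at $x$ partition the edges of $W_i$ incident with $x$ into pairs, and by the alternation condition each such pair consists of one positive and one negative edge. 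Therefore $W_i$ contributes equally many positive as negative edges at $x$; summing over $i$ (every edge incident with $x$ lies in exactly one $W_i$) shows that $x$ is incident with as many positive as negative edges.

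\emph{``If''.} It is convenient to prove, for every simple graph $A$ with edge labels such that every vertex is incident to as many positive as negative edges, that $E(A)$ is a disjoint union of alternating closed walks, allowing the empty union when $E(A)=\emptyset$. We induct on $|E(A)|$. If $E(A)=\emptyset$ there is nothing to prove. Otherwise the balance hypothesis lets us apply Lemma~\ref{lem:existsW} to obtain an alternating closed walk $W$ in $A$ (a sequence of distinct edges, so $E(W)\subseteq E(A)$, and closed and alternating modulo its length by the construction in that proof). Let $A'$ be the graph on $V(A)$ with edge set $E(A)\setminus E(W)$, carrying the restricted labels. By exactly the local count used in the ``only if'' direction, $W$ deletes equally many positive as negative edges at every vertex, so $A'$ again has every vertex incident with as many positive as negative edges. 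Since $|E(A')|<|E(A)|$, the induction hypothesis expresses $E(A')$ as a disjoint union of alternating closed walks; adjoining $W$ yields the desired decomposition of $E(A)$.

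The one point requiring care is the vertex bookkeeping for a closed walk that visits a vertex several times: one must match the incident edges into the correct consecutive ``in/out'' pairs around the cyclic sequence, including the wrap-around pair $(e_n,e_1)$, and use $\sigma(e_j)\sigma(e_{j+1})=-1$ for all $j$ modulo $n$ to see each pair mixes the two signs (in particular $n$ is even). Once this pairing is in place, both the ``only if'' direction and the crucial invariant---that removing an alternating closed walk preserves the balance condition---follow at once, and Lemma~\ref{lem:existsW} supplies the walk needed to drive the induction; everything else is routine.
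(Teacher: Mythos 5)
Your proof is correct and follows essentially the same route as the paper's: necessity via the local observation that each alternating closed walk meets every vertex in equally many positive and negative edges, and sufficiency by repeatedly extracting a walk via Lemma~\ref{lem:existsW} while noting that deletion preserves the balance condition. Your version merely makes explicit the per-vertex pairing of consecutive edges and the induction on $|E|$, which the paper leaves implicit.
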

\begin{proof}
  First we note that the condition is necessary. Suppose there is a vertex
  $x$ with $d_+(x)\ne d_-(x)$. Since every alternating closed walk $W_i$
  necessarily traverses the same number of positive and negative edges,
  successively removing all edges in the walks $W_i$ leaves an excess of
  positive or negative edges at $x$. Thus $E \setminus \bigcup_i
  W_i\ne\emptyset$, contradicting the assumption that the $W_i$ form a
  partition of $E$.

  In order to see that the condition is sufficient, we use
  Lemma~\ref{lem:existsW} to establish the existence of an alternating
  closed walk in $A$, whose edge set we denote by $W$. Denote by
  $A'=(V,E\setminus W)$ the graph obtained by deleting the edges along the
  alternating closed walk. Since every vertex $x\in V$ is incident to as
  many positive as negative edges in both $E(A)$ and $W$, this is also true
  for $E(A')=E \setminus W$. The graph $A'$ therefore contains an
  alternating closed walk $W'$.  Repeating the argument yields a partition
  of $E$ into the edge sets of alternating closed walks.
\end{proof}

The connection to Eulerian graphs is that the vertex degree of the graph in
Thm.~\ref{thm:alternatingEuler} is even, and thus the graph is Eulerian if
and only if it connected. Assuming that $A$ is connected, one can combine
the alternating closed walks to a single alternating closed walk, see
e.g.\ \cite{Fleischner:90}, arriving at the following variant of
Hierholzer's algorithm \cite{Hierholzer:1873}: If two alternating closed
walks $W_1$ and $W_2$ share a vertex $x$, we first follow $W_1$ from an
arbitrary starting point to $x$, and then traverse $W_2$ choosing the
direction such that the combined walk is alternating. After returning to
$x$, we follow the unused part of $W_1$ from $x$ to the starting
point. This results in a single alternating closed walk $W'$ covering
exactly the edges of $W_1$ and $W_2$. If $W'$ shares a vertex $x'$ with
some alternating closed walk $W_3$, the procedure is repeated, resulting in
a single alternating closed walk $W''$ comprising the edges of $W_1$,
$W_2$, and $W_3$. One continues until all $W_i$ are absorbed into a single
alternating closed walk. Connectedness of $A$ implies that this final walk
contains all edges and thus is an \emph{alternating Euler tour}.
Otherwise, Hierholzer's algorithm produces an alternating Euler tour for
each connected component of $A$.  This argument implies:
\begin{corollary}
  Let $A$ be a simple graph with non-empty edge set $E$ and edge labels
  $\sigma \colon E\to\{+1,-1\}$. Then $A$ admits an alternating Euler tour
  if and only if $A$ is connected and every vertex is incident to as many
  positive as negative edges.
\end{corollary}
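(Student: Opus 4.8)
The plan is to prove the two directions separately, leaning on Theorem~\ref{thm:alternatingEuler} for the substantive one. For necessity, suppose $T=(e_1,\dots,e_n)$ is an alternating Euler tour, so that $E=\{e_1,\dots,e_n\}$ with the $e_i$ pairwise distinct and $\sigma(e_k)\sigma(e_{k+1})=-1$ for all $k$ taken modulo $n$. Connectedness of $A$ (barring isolated vertices, see below) is immediate, since $T$ exhibits a walk between the endpoints of any two edges. For the local sign balance at a fixed vertex $x$, I would read $T$ cyclically and observe that each occurrence of $x$ in its vertex sequence comes with one incoming and one outgoing edge at $x$, and these two have opposite sign by the alternating condition; since $A$ is simple (no loops) and $T$ traverses every edge exactly once, the $\deg(x)$ edges incident to $x$ are thereby partitioned into pairs, one positive and one negative edge each, so $d_+(x)=d_-(x)=\deg(x)/2$.

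For sufficiency, assume $A$ is connected and $d_+(x)=d_-(x)$ for every $x$. Theorem~\ref{thm:alternatingEuler} gives a partition of $E$ into the edge sets of alternating closed walks $W_1,\dots,W_k$, and the task is to splice them into a single one. First I would note that whenever $k\ge 2$, two of the $W_i$ must share a vertex: choosing edges in two different classes and a path in $A$ joining their endpoints (which exists by connectedness), some pair of consecutive edges along that path belongs to two different classes, and those two edges share a vertex. Then I would merge two such walks at a common vertex, exactly along the lines of the Hierholzer-style construction sketched before the statement: follow $W_1$ from an arbitrary start to an occurrence of the shared vertex $x$, detour around all of $W_2$ beginning and ending at $x$, and then finish the remainder of $W_1$. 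This decreases the number of walks by one while keeping a partition of $E$; iterating terminates with a single alternating closed walk covering all of $E$, which is the desired alternating Euler tour.

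The point that I expect to need the most care is checking that the detour around $W_2$ can be inserted without spoiling the alternating condition at $x$. The key auxiliary fact is that every alternating closed walk has even length — if the signs strictly alternate along a cyclic sequence of odd length, the first and last edges would have equal sign, contradicting the cyclic alternating condition — which makes reversing $W_2$ and cyclically re-rooting it at an occurrence of $x$ both harmless. Writing $s$ for the sign of the edge of $W_1$ along which the merged walk arrives at $x$, the next edge of $W_1$ has sign $-s$; since $W_2$ passes through $x$, at that passage one of its two edges at $x$ has sign $-s$ and the other $s$, so I would traverse $W_2$ in whichever direction leaves $x$ along the edge of sign $-s$, which then necessarily returns to $x$ along an edge of sign $s$, making the splice alternating on both sides. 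The only caveat, inherited from the classical Euler-tour theorem, is that vertices isolated in $A$ must be excluded, or equivalently ``connected'' must be read as connectedness of the subgraph induced by $E$; I would flag this explicitly in the statement or proof.
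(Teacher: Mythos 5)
Your proof is correct and follows essentially the same route as the paper: necessity by local sign-pairing along the tour, sufficiency by invoking Theorem~\ref{thm:alternatingEuler} and then splicing the resulting walks at shared vertices via the Hierholzer-style merge described just before the statement. The extra details you supply --- that connectedness forces two walk classes to share a vertex, that alternating closed walks have even length (which is what makes the re-rooted and possibly reversed detour around $W_2$ splice in alternately on both sides), and the caveat about isolated vertices --- are all correct and merely make explicit what the paper leaves implicit.
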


\section{Explanations by Electron Pushing Diagrams} 

Returning to atom-atom maps (AAMs) and corresponding difference multigraphs
$\Delta(G,H,\alpha)$, we can use Cor.~\ref{cor:corr} to restate
Thm.~\ref{thm:alternatingEuler}:
\begin{corollary}
  The edge set of a difference multigraph $\Delta$ can be partitioned into
  the edge sets of alternating closed walks. Moreover, if $\Delta$ is
  connected, it admits an alternating Euler tour.
\end{corollary}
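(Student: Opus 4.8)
The plan is to route everything through the auxiliary graph $A(\Delta)$ and apply Thm.~\ref{thm:alternatingEuler}. We may assume $\Delta$ has at least one edge, the empty case being vacuous. First I would invoke the corollary stated just before Lemma~\ref{lem:existsW}, which asserts that every vertex of $A(\Delta)$ --- whether a $\Delta$-vertex or a subdivision vertex --- is incident to as many positive as negative edges; this is precisely the hypothesis of Thm.~\ref{thm:alternatingEuler}. Applying that theorem to the simple signed graph $A(\Delta)$ therefore yields a partition of $E(A(\Delta))$ into the edge sets of alternating closed walks.

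Next I would transport this partition back to $\Delta$. By Cor.~\ref{cor:corr}, the edge set of $\Delta$ is a disjoint union of alternating closed walks if and only if the edge set of $A(\Delta)$ is; since we have just produced such a decomposition of $E(A(\Delta))$, the first assertion follows immediately.

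For the Euler-tour claim I would use the refinement of Thm.~\ref{thm:alternatingEuler} --- the corollary immediately following it --- according to which a simple signed graph with the balanced-sign property admits an alternating Euler tour precisely when it is connected. Since subdividing each edge of a graph preserves connectivity, $\Delta$ connected implies $A(\Delta)$ connected, so $A(\Delta)$ has an alternating Euler tour $W^{*}$. By Lemma~\ref{lem:auxgraphwalk}, $W^{*}$ contracts to a uniquely determined alternating closed walk $W$ in $\Delta$; because $W^{*}$ traverses every edge of $A(\Delta)$, it contains each path $P_3(e)$ with $e\in E(\Delta)$ exactly once, and hence $W$ traverses every edge of $\Delta$ exactly once, i.e., $W$ is an alternating Euler tour of $\Delta$.

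I do not expect a genuine obstacle here: all the work has been carried out in the preceding lemmas, and the corollary is essentially a translation of Thm.~\ref{thm:alternatingEuler} across the correspondence $\Delta \leftrightarrow A(\Delta)$. The only point deserving a moment's care is verifying that $P_3$-contraction sends an Euler tour of $A(\Delta)$ to an Euler tour of $\Delta$, which is exactly the observation that the sets $E(P_3(e))$ partition $E(A(\Delta))$ and each occurs once along $W^{*}$.
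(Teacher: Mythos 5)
Your proposal is correct and follows exactly the route the paper intends: the paper presents this corollary as a restatement of Thm.~\ref{thm:alternatingEuler} via Cor.~\ref{cor:corr}, using the balanced-sign property of $A(\Delta)$ and, for the Euler-tour claim, the corollary following Thm.~\ref{thm:alternatingEuler} together with the contraction of Lemma~\ref{lem:auxgraphwalk}. You have merely spelled out the details (non-empty edge set, preservation of connectivity under subdivision, each $P_3(e)$ traversed exactly once) that the paper leaves implicit.
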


Every alternating closed walk can be directly interpreted as an electron
pushing diagram (EPD). To this end, choose an arbitrary vertex $x$ and a
starting edge $e=xy$ with $\sigma(e)=-1$. The corresponding electron pair
is considered to be ``pushed'' from $xy$ to the next edge $yz$ along the
walk. Clearly, every EPD interpretation of an alternating closed walk
applied to the educt graph $G$ yields the same graph $G'$, describing a
reaction intermediate.  Thus, we can derive an \emph{EPD explanation} for
every AAM as follows: First, construct the difference multigraph
$\Delta\coloneqq\Delta(G,H,\alpha)$, then choose a sequence of alternating
closed walks whose edge sets partition $E(\Delta)$. The sequences of
negative and positive edges of each walk, starting from any of its
vertices, form an EPD. The AAM is explained by applying these EPDs in the
chosen order to the educt multigraph $G$. Of course, the order of the
closed walks can be chosen arbitrarily and yields the same product graph.

By definition, alternating closed walks have the property that every edge
is traversed only once. By construction, all edges of $\Delta$ connecting
two given vertices $x$ and $y$ have the same sign. Hence, the EPD
explanation of the AAM is monotone in the sense that the bond order, i.e.,
the multiplicity $m(x,y)$, is either always increased or always decreased
when a alternating closed walk traverses $xy$. In particular, each
alternating closed walk only traverses edges that are present in the educt
multigraph, the product multigraph, or both. The explanations produced in
this manner never insert a new bond and later delete it again. However, the
walks in this construction may be very long, and thus chemically
unrealistic.

\begin{figure} 
  \begin{center}
    \begin{tabular}{rcl}
      \begin{minipage}{0.5\textwidth}
      \includegraphics[width=\textwidth]{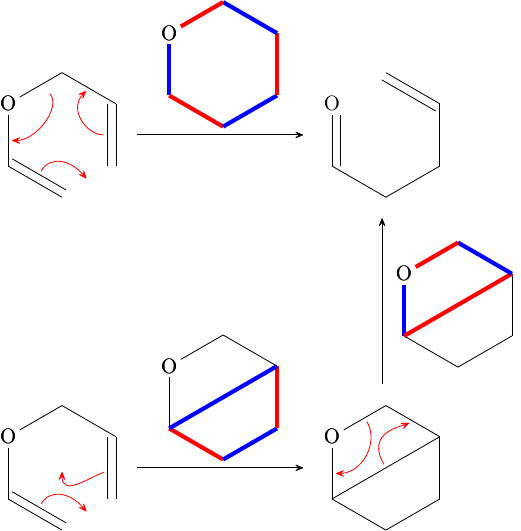}
      \end{minipage}
      &&
      \begin{minipage}{0.40\textwidth}
        \caption{Claisen rearrangement of vinyl-allyl ethers. The
          6-cyclic imaginary transition state (upper path), can be decomposed
          into two 4-cyclic imaginary transition states (lower path) to
          yield the $\gamma,\delta$-unsaturated carbonyl compound.}
        \label{fig:reallife}
      \end{minipage}
    \end{tabular}
  \end{center}
\end{figure}

We ask, therefore, if one can produce an EPD explanation with restricted
lengths of the alternating closed walks. The latter have an even length
with a minimum of $2k=4$ edges.  (The case of length~$2$ is impossible. It
corresponds to inserting and subsequently deleting the same edge, which
does not change the graph to which it is applied.)

Let $W$ be an alternating closed walk of length $|W|=2k\ge 6$ with the
first edge being negative, and let $\hat W=(x_0,x_1,\dots,x_{2k-1},x_0)$ be
the sequence of traversed vertices.  Note that edges $x_{2j}x_{2j+1}$ are
negative, and edges $x_{2j+1}x_{2j+2}$ (mod 2$k$) are positive.  The EPD
(represented by) $\hat W$ is applied to the graph $G$ which yields $H$.
Now, consider the EPDs
\begin{equation}
  \label{eq:splitW}
  \begin{split} 
    \hat W_1 &\coloneqq  (x_0,x_1,\dots,x_{2i-1},x_0) \\  
    \hat W_2 &\coloneqq (x_0,x_{2i-1},x_{2i},\dots,x_{2k-1},x_0)
  \end{split}
\end{equation}
with $2 \le i < k$.  Note that $\hat W_1$ and $\hat W_2$ have length $2i$
and $2(k-i+1)$, respectively. The EPD $\hat{W}_1$ has the fictitious last
step $x_{2i-1}x_0$, which is positive (since the first step $x_0x_1$ is
negative), even if there is no such edge in $\Delta$ or if it has negative
sign.  Analogously, the fictitious first step $x_0x_{2i-1}$ of $\hat{W}_2$
is negative (since the last step $x_{2k-1}x_0$ is positive).  Now, if
$x_0x_{2i-1}$ is not an edge of $G$, then $\hat{W}_1$ is applicable to $G$
since all changes caused by $\hat{W}_1$ (other than $x_0x_{2i-1}$) yield
changes in multiplicities that are smaller than the changes caused by
$\hat{W}$.  Moreover, $\hat{W}_1$ adds the edge $x_0x_{2i-1}$ to $G$. Hence
$\hat{W}_2$ is applicable to the resulting graph. Furthermore, since the
union of $\hat{W}_1$ and $\hat{W}_2$ equals $\hat{W}$ except for the
insertion and deletion of the fictitious edge $x_0x_{2i-1}$, the
consecutive application of $\hat{W}_1$ and $\hat{W}_2$ recovers the graph
$H$.  Similarly, if $x_0x_{2i-1}$ is an edge in $G$, then $\hat{W}_2$ is
applicable to $G$, which deletes this edge or at least reduces the bond
order. In the second step, the bond order is re-established by applying
$\hat{W}_1$. Again, the consecutive application of $\hat{W}_1$ and
$\hat{W}_2$ recovers the graph $H$.

In summary, we can split an EPD $\hat{W}$ of length $2k$ into two parts
$\hat{W}_1$ and $\hat{W}_2$ of length $l_1$ and $l_2$ with
$l_1+l_2=2(k+1)$. In this operation, an additional bond is formed and
broken again, or \textit{vice versa}. Repeating the splitting, we can break
up any EPD into a sequence of EPDs of length~$4$. Every such operation
involves the insertion and deletion of an additional bond not present in
the original EPD.  For an illustration, see Fig.~\ref{fig:reallife}.
We have shown the first main result of this contribution:
\begin{corollary}
  \label{cor:main}
  Every atom-atom map can be explained by a finite sequence of EPDs of
  length~$4$.
\end{corollary}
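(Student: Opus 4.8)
The plan is to combine the two ingredients developed above: the decomposition of a difference multigraph into alternating closed walks, and the splitting operation of Eq.~\eqref{eq:splitW}. Given an AAM $\alpha\colon V(G)\to V(H)$, form the difference multigraph $\Delta\coloneqq\Delta(G,H,\alpha)$. By Cor.~\ref{cor:corr} together with Thm.~\ref{thm:alternatingEuler}, the edge set of $\Delta$ is the disjoint union of the edge sets of alternating closed walks. Each such walk, as noted above, has even length $2k$ with $k\ge 2$; reading off its sequence of negative and positive edges from any of its vertices turns it into an EPD, and applying these EPDs to $G$ in any fixed order reconstructs $H$.

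It therefore suffices to show that the EPD $\hat W$ arising from a single alternating closed walk of length $2k$ can be rewritten as a finite sequence of EPDs of length~$4$, which I would prove by induction on $k$. For $k=2$ there is nothing to do. For $k\ge 3$, apply the splitting of Eq.~\eqref{eq:splitW} with $i=2$: this produces $\hat W_1$ of length $2i=4$ and $\hat W_2$ of length $2(k-i+1)=2(k-1)$, and --- depending on whether the fictitious edge $x_0x_3$ is already present in the current graph --- the consecutive application of $\hat W_1$ and $\hat W_2$, in the order dictated above, has the same net effect on the graph as $\hat W$. Since $k-1\ge 2$, the induction hypothesis rewrites $\hat W_2$ as a sequence of $k-2$ EPDs of length~$4$; prepending or appending $\hat W_1$ yields $k-1$ EPDs of length~$4$ that realize $\hat W$. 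The parameter $k$ strictly decreases at each step and is bounded below by $2$, so the recursion terminates.

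Putting the pieces together, the AAM $\alpha$ is explained by performing, one walk at a time, the length-$4$ EPDs obtained from the alternating closed walks that partition $E(\Delta)$; this is a finite sequence of EPDs of length~$4$, which proves the corollary.

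The step I expect to require the most care is the applicability bookkeeping inside the splitting: one must check that after inserting the fictitious edge $x_0x_3$ the intermediate graph still has non-negative edge multiplicities, so that the second sub-EPD is applicable, and that the two sub-EPDs are applied in the order determined by whether that edge is already present. This is precisely the case distinction carried out in the paragraph preceding the corollary, so in the proof it only has to be invoked. The auxiliary facts that alternating closed walks have even length, and that length~$2$ cannot occur for a difference multigraph because its parallel edges (and its loops at a common vertex) all share a sign, are likewise already recorded above.
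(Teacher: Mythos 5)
Your proposal is correct and follows essentially the same route as the paper: decompose $E(\Delta)$ into alternating closed walks via Cor.~\ref{cor:corr} and Thm.~\ref{thm:alternatingEuler}, read each walk as an EPD, and repeatedly apply the splitting of Eq.~\eqref{eq:splitW} (with the same case distinction on whether the fictitious edge is present) until only length-$4$ pieces remain. The only difference is presentational --- you make the induction on $k$ and the termination count explicit where the paper simply says ``repeating the splitting'' --- which is a harmless refinement, not a new argument.
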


A chemical reaction in which the number of non-bonding electron pairs,
i.e., the loops in the multigraph representing the Lewis formula, does not
change at any atom is called \emph{homovalent} \cite{Hendrickson:97}.  The
difference graph $\Delta$ is loop-free if and only if the reaction is
homovalent. The resulting alternating closed walks therefore are also
loop-free. Cor.~\ref{cor:main} immediately implies:
\begin{corollary}
  \label{cor:homoval}
  Every atom-atom map of a homovalent reaction can be explained by a finite
  sequence of EPDs each of which is a simple four cycle.
\end{corollary}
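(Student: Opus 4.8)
The plan is to deduce Corollary~\ref{cor:homoval} directly from Corollary~\ref{cor:main} together with the observation, already recorded in the paragraph preceding the statement, that a homovalent reaction has a loop-free difference multigraph. First I would recall that by Corollary~\ref{cor:main} every atom-atom map can be explained by a finite sequence of EPDs of length~$4$, each obtained from an alternating closed walk of length~$4$ in the difference multigraph $\Delta$ (possibly after the splitting procedure of Eqn.~\eqref{eq:splitW}). The point to verify is that, for a homovalent reaction, each such length-$4$ walk is not merely a closed walk of four edges but an honest simple $4$-cycle, i.e.\ visits four distinct $\Delta$-vertices and uses no loop.

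The first key step is to note that since the reaction is homovalent, $\Delta=\Delta(G,H,\alpha)$ is loop-free, so none of the alternating closed walks produced by the partition of Cor.~\ref{cor:corr} traverses a loop. The second step concerns the splitting operation: when an EPD $\hat W$ of length $2k\ge 6$ is split into $\hat W_1$ and $\hat W_2$ via~\eqref{eq:splitW}, the only new edge introduced is the fictitious edge $x_0x_{2i-1}$, which joins two distinct $\Delta$-vertices $x_0\ne x_{2i-1}$ (indices taken mod $2k$, and $x_0=x_{2i-1}$ would force a trivial length-$2$ piece, excluded by $2\le i<k$); hence no loop is ever created, and the recursive splitting of a loop-free walk yields only loop-free walks. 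Therefore every resulting EPD of length~$4$ corresponds to a closed walk $(x_0,x_1,x_2,x_3,x_0)$ on four edges with no loop among them.

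The third step is to argue that a loop-free alternating closed walk of length exactly~$4$ must be a simple $4$-cycle. Write the walk as $(x_0,x_1,x_2,x_3,x_0)$ with edges $x_0x_1, x_1x_2, x_2x_3, x_3x_0$; consecutive vertices are distinct because the walk is loop-free, so $x_0\ne x_1$, $x_1\ne x_2$, $x_2\ne x_3$, $x_3\ne x_0$. It remains to rule out $x_0=x_2$ and $x_1=x_3$. If $x_0=x_2$, the four edges all join the pair $\{x_0,x_1\}$; but the walk is alternating, so $x_0x_1$ and $x_1x_2=x_1x_0$ have opposite sign, contradicting the fact (noted just before Fig.~\ref{fig:reallife}, in the paragraph on monotonicity) that all edges of $\Delta$ between a fixed pair of vertices carry the same sign. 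The same argument with the pair $\{x_1,x_3\}=\{x_1,x_1\}$\,---\,more precisely, if $x_1=x_3$ then edges $x_1x_2$ and $x_2x_3=x_2x_1$ would have opposite signs on the pair $\{x_1,x_2\}$\,---\,gives the analogous contradiction. Hence $x_0,x_1,x_2,x_3$ are pairwise distinct and the walk is a simple four cycle. Combining the three steps, the finite sequence of length-$4$ EPDs furnished by Cor.~\ref{cor:main} consists, in the homovalent case, entirely of simple four cycles, which is the claim.

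I do not expect a genuine obstacle here; the statement is essentially a specialization of Cor.~\ref{cor:main}. The one spot that needs a little care is the third step: ruling out a ``degenerate'' length-$4$ walk that revisits a vertex, which would be a $2$-edge multigraph between two vertices traversed twice rather than a $4$-cycle. This is handled cleanly by the monotonicity observation that parallel edges in $\Delta$ share a sign, which is incompatible with the alternation condition; I would state this explicitly rather than leaving it implicit.
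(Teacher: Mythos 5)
Your route is the same as the paper's: Cor.~\ref{cor:homoval} is obtained there in one line by noting that homovalence makes $\Delta$ loop-free, so the alternating closed walks and hence the length-$4$ EPDs of Cor.~\ref{cor:main} contain no loops. You go further than the paper in trying to check that the length-$4$ pieces produced by the splitting \eqref{eq:splitW} are genuinely \emph{simple} $4$-cycles, and that is exactly where your argument breaks. The parenthetical claim that $x_0\ne x_{2i-1}$ because ``$x_0=x_{2i-1}$ would force a trivial length-$2$ piece, excluded by $2\le i<k$'' confuses index arithmetic with vertex identity: a closed walk may revisit its starting vertex, so the \emph{vertex} $x_{2i-1}$ can equal $x_0$ even though $2i-1\ne 0$ as an index. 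For the loop-free alternating $6$-walk $(a,b,c,a,d,e,a)$ the only admissible split ($i=2$) creates the fictitious edge $x_0x_3=aa$, a loop. So loop-freeness of the pieces is not automatic; it requires choosing the starting vertex and/or split index so that the hub is not revisited, which you do not arrange.

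Your third step has a related hole. The ``parallel edges of $\Delta$ share a sign'' argument rules out $x_0=x_2$ or $x_1=x_3$ only when the two edges being compared are both \emph{real} edges of $\Delta$. After the first split, $\hat W_2$ already begins with a fictitious edge, so the middle pieces of the recursion have the pattern (fictitious, real, real, fictitious); for such a piece the coincidence of the hub with the opposite vertex cannot be excluded, because at each of the two intermediate vertices one of the two incident walk-edges is fictitious. Concretely, take the loop-free $\Delta$ on $\{a,b,c,d\}$ with edges $ab^-,\,cd^-,\,ac^-,\,bd^-$ and double edges $bc^+,\,da^+$: every vertex has $d_+=d_-=2$, the alternating Euler tour is $(a,b,c,d,a,c,b,d,a)$, and repeated splitting yields the piece $(a,d,a,c,a)$, which is loop-free but a ``bowtie'', not a simple $4$-cycle; re-rooting does not help since every vertex is visited twice. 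The corollary survives (here one should partition $E(\Delta)$ into the two simple alternating $4$-cycles directly, as Thm.~\ref{thm:alternatingEuler} permits), but establishing it in general needs an argument for choosing the walks and split points that neither your write-up nor, admittedly, the paper's one-line derivation supplies. State and prove that choice explicitly rather than asserting non-degeneracy.
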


\section{Decomposition into Elementary Reactions}

In an \emph{elementary reaction}, one or more chemical species react
directly to form products in a single step and with a single transition
state. Collision theory implies that the probability of three or more
molecules reacting simultaneously is negligible. Hence almost all reactions
will be uni-molecular (isomerizations or decompositions) or
bi-molecular. Even classical examples of termolecular reactions such as the
oxidation of nitrogen monoxide, {2 NO+O\textsubscript{2} $\to$ 2
  NO\textsubscript{2}}, are likely explained by sequences of uni- and
bi-molecular steps \cite{Olbregts:85}. Reversibility then implies that
elementary reactions produce no more than two product molecules.
\begin{definition}
  A chemical reaction is \emph{combinatorially elementary} if it is
  of the form {$A\to B$}, $A\to B+C$, $A+B\to C$, or $A+B\to C+D$, where
  $A$, $B$, $C$, and $D$ are not necessarily distinct.
\end{definition}
\begin{figure} 
  \begin{center}
    \begin{tabular}{cc}
      \begin{minipage}{0.5\textwidth}
      \includegraphics[width=\textwidth]{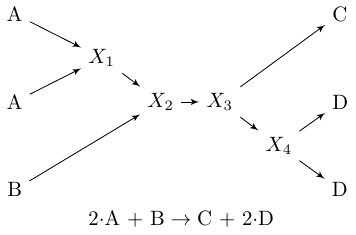}
      \end{minipage}
      &
      \begin{minipage}{0.40\textwidth}
        \caption{Representation of a complex overall reaction by elementary
          steps. Introducing intermediate species $X_i$, every
          left-hand side of an overall reaction
          (e.g.\ 2$\cdot$A~+~B$\;\rightarrow\;$C~+~2$\cdot$D) can be
          transformed into the respective right-hand side. In the proof of
          Prop.~\ref{prop:Stefan}, the educt species are first aggregated
          into a single intermediate ($X_2$ in the shown example)
          by means of 2-to-1 reactions.
          The corresponding intermediate in the product species ($X_3$) is then
          disassembled by 1-to-2 reactions.}
        \label{fig:reabybintree}
      \end{minipage}
    \end{tabular}
  \end{center}
\end{figure}
This begs the question whether an arbitrary (complex) reaction can always
be decomposed into a sequence of combinatorially elementary reactions. Next
we show that this is always possible at the abstract level:
\begin{proposition}
  \label{prop:Stefan}
  Every formal reaction $\sum a_i X_i \to \sum b_i X_i$ can be written as a
  sum of combinatorially elementary reactions. Let $a=\sum_i a_i$ and
  $b=\sum_i b_i$. Then, the number of additional intermediate species is
  bounded above by $\max(a-2,0)+\max(b-2,0)$.
\end{proposition}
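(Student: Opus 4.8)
The plan is to give an explicit combinatorial construction. Write the educt molecules, counted with multiplicity, as a list $e_1,\dots,e_a$ and the product molecules as $p_1,\dots,p_b$, so that the overall reaction reads $e_1+\dots+e_a\to p_1+\dots+p_b$. The idea is to process the educt side by a \emph{merge tree} $T_e$ made of $2\to1$ reactions, the product side by a \emph{split tree} $T_p$ made of $1\to2$ reactions, and to bridge the two trees by a single $2\to2$ reaction, so that the two ``fully aggregated'' species sitting at the roots of $T_e$ and $T_p$ never have to be named. Concretely: choose any full binary tree $T_e$ whose $a$ leaves carry the labels $e_1,\dots,e_a$ and any full binary tree $T_p$ whose $b$ leaves carry the labels $p_1,\dots,p_b$; since a full binary tree with $k$ leaves has exactly $k-1$ internal nodes, introduce one fresh intermediate species for every internal node that is \emph{not} a root, and identify each node with the species sitting on it.

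The reactions are then the following. For each non-root internal node $v$ of $T_e$ with children $u_1,u_2$ (each either a leaf, i.e.\ an educt, or an already-introduced intermediate) take the reaction $u_1+u_2\to v$ (of the form $A+B\to C$); for each non-root internal node $w$ of $T_p$ with children $w_1,w_2$ take the reaction $w\to w_1+w_2$ (of the form $A\to B+C$); and, writing $C_1,C_2$ for the two children of the root of $T_e$ and $D_1,D_2$ for those of the root of $T_p$, add the central reaction $C_1+C_2\to D_1+D_2$ (of the form $A+B\to C+D$). Every listed reaction thus has at most two species on each side and is combinatorially elementary. If $a=1$ there is no merge tree and the unique educt $e_1$ plays the role of the root of $T_e$, so the central reaction degenerates to $e_1\to D_1+D_2$; symmetrically if $b=1$; and if $a=b=1$ the central reaction is $e_1\to p_1$, which is already the whole reaction. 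In every case the central reaction remains combinatorially elementary.

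Two verifications remain. First, the multiset sum of the listed reactions equals the overall reaction: every intermediate species occurs exactly once as an educt and exactly once as a product across the list---once in the reaction attached to its own node and once in the reaction attached to its parent (or, when the parent is a root, in the central reaction)---so all intermediates cancel, leaving on the educt side exactly the leaves of $T_e$, that is $e_1+\dots+e_a$, and on the product side exactly the leaves of $T_p$, that is $p_1+\dots+p_b$. Second, the number of intermediates introduced is the number of non-root internal nodes of $T_e$ plus that of $T_p$, namely $(a-2)+(b-2)$ when $a,b\ge2$, and $b-2$, $a-2$, or $0$ in the degenerate cases; in each case this equals $\max(a-2,0)+\max(b-2,0)$, the asserted bound.

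The one genuinely delicate point is getting this tight count rather than something on the order of $a+b-2$: a naive ``aggregate all educts into a single species, then disassemble it into all products'' argument wastes one intermediate on each side, and the saving comes precisely from never instantiating the two fully aggregated species but instead joining $T_e$ and $T_p$ by one $2\to2$ reaction that refers only to their children. This also sidesteps a subtlety of the \emph{formal} setting, where no stoichiometric balance between the two sides need hold, so one may not in general identify ``all educts combined'' with ``all products combined''. Beyond these observations the proof is just the telescoping bookkeeping above together with the handful of degenerate cases; choosing the two binary trees as caterpillars recovers exactly the linear aggregate-then-disassemble picture.
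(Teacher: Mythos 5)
Your construction is correct and is essentially the same as the paper's: two rooted binary trees over the educt and product multisets, $2\to 1$ merges on one side, $1\to 2$ splits on the other, joined by a single $2\to 2$ reaction between the roots' children so that the two fully aggregated root species are never instantiated, which is exactly how the paper saves the two intermediates and reaches the bound $\max(a-2,0)+\max(b-2,0)$. Your write-up is somewhat more explicit about the telescoping cancellation and the degenerate cases $a\le 2$ or $b\le 2$, but there is no substantive difference in method.
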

\begin{proof}
  If $a,b \le 2$, the reaction itself is combinatorially elementary.  Now,
  let $a,b>2$, and let $T_a$ and $T_b$ be rooted binary trees
  with $a$ and $b$ leaves, respectively. The leaves of the two trees can be
  labeled such that the chemical species $X_i$ appears $a_i$ times as a
  leaf label of $T_a$ and $b_i$ times as a leaf label of $T_b$. Each
  non-leaf vertex of $T_a$ and $T_b$ corresponds to a chemical intermediate
  such that in $T_a$ a parent $p$ is formed from its two children $c_1$ and
  $c_2$, i.e., by the reaction $c_1+c_2\to p$, while in $T_b$ two children
  are formed from their parent, i.e., by the reaction $p \to c_1+c_2$.
  Clearly, the trees have $a-1$ and $b-1$ non-leaf vertices.  At the roots
  of the two trees, we have the intermediates $r_a$ and $r_b$,
  respectively, and the corresponding reactions $c_1 + c_2 \to r_a$ and
  $r_b \to c_3 + c_4$.  We link the two sets of reactions by adding a
  reaction between the roots of the two trees, $r_a \to r_b$, or by
  directly adding $c_1 + c_2 \to c_3 + c_4$, thereby omitting two
  intermediates.  In the latter case, we obtain a possible decomposition of
  the formal reaction into combinatorially elementary reactions with
  $n=a-2+b-2=a+b-4$ intermediates (and $n+1$ reactions among them). If
  either $a>2$ and $b\le 2$ or vice versa, then we need to consider only
  one tree. See also Fig.~\ref{fig:reabybintree}.
\end{proof}

Prop.~\ref{prop:Stefan} prompts the question whether an arbitrary AAM also
can be decomposed into combinatorially elementary reactions. As we shall
see, Cor.~\ref{cor:main} can be used to establish a stronger version of
Prop.~\ref{prop:Stefan}. To this end, we define the \emph{active parts}
$G^a$ and $H^a$ of the educt and product graphs $G$ and $H$ under an AAM
$\alpha$ as the union of the connected components of $G$ and $H$ that
contain an edge contributing to the difference graph $\Delta(G,H,\alpha)$.

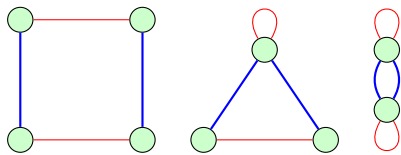
\begin{figure}
  \begin{tabular}{cc}
  \begin{minipage}{0.5\textwidth}
\begin{tikzpicture}
  [scale=0.8,auto=left]
  \tikzset{v/.style={circle,draw,fill=green!20}}
  \node[v]  (n1) at (2,2) {};
  \node[v]  (n2) at (2,4) {};
  \node[v]  (n3) at (4,4) {};
  \node[v]  (n4) at (4,2) {};
  \draw[thick,blue]  (n1) -- (n2);
  \draw[thin,red]    (n2) -- (n3);
  \draw[thick,blue]  (n3) -- (n4);
  \draw[thin,red]    (n4) -- (n1);

  \node[v]  (n1) at (5,2) {};
  \node[v]  (n2) at (6,3.5) {};
  \node[v]  (n3) at (7,2) {};
  \draw[thick,blue]  (n1) -- (n2);
  \path (n2) edge[out=60,in=120,looseness=9,thin,red] (n2); 
  \draw[thick,blue]  (n2) -- (n3);
  \draw[thin,red]    (n3) -- (n1);

  \node[v]  (n1) at (8,2.5) {};
  \node[v]  (n2) at (8,3.5) {};
  \path (n1) edge[out=60,in=300,looseness=1,thick,blue] (n2);
  \path (n1) edge[out=120,in=240,looseness=1,thick,blue] (n2);  
  \path (n2) edge[out=60,in=120,looseness=9,thin,red] (n2);
  \path (n1) edge[out=240,in=300,looseness=9,thin,red] (n1);
\end{tikzpicture}
  \end{minipage}
  &
  \begin{minipage}{0.47\textwidth}
    \caption{There are three types of alternating closed walks with four
      edges. (Positive edges in blue, negative edges in red.) The simple
      4-cycle (left) corresponds to the simplest homovalent reaction
      mechanism. The triangular walk (middle) is the simplest example of an
      ambivalent mechanism, changing the oxidation state of the atom at the
      top.}
    \label{fig:4walks}
  \end{minipage}
  \end{tabular}
\end{figure}

\begin{lemma}
  Let $\alpha \colon V(G)\to V(H)$ be an AAM explained by a single EPD of length~$4$. 
  Then the active parts $G^a$ and $H^a$ each comprise at most two
  connected components.
  \label{lem:4to2}
\end{lemma}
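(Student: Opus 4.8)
The plan is to exploit the rigidity of a length-$4$ EPD. Such an explanation is an alternating closed walk $W$ whose edge set is exactly the edge set of $\Delta\coloneqq\Delta(G,H,\alpha)$, and, identifying $V(G)=V(H)=V$ through $\alpha$ as in the rest of the paper, applying $W$ to $G$ returns $H$. Write $\hat W=(x_0,x_1,x_2,x_3,x_0)$ for the sequence of traversed vertices (with repetitions allowed, so that all three shapes in Fig.~\ref{fig:4walks} are covered at once). Since $W$ alternates and has even length, two of its four edges are negative and two are positive; after choosing the initial edge suitably, the negative edges are $x_0x_1$ and $x_2x_3$ and the positive edges are $x_1x_2$ and $x_3x_0$. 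The first thing I would record is that every edge of $\Delta$ is incident with a vertex of $\{x_0,x_1,x_2,x_3\}$, so $G^a$ (resp.\ $H^a$) is precisely the union of those connected components of $G$ (resp.\ $H$) that meet the set $\{x_0,x_1,x_2,x_3\}$.

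Next I would bound how many components of $G$ this set can meet. The key point is that the two negative edges of $\Delta$ are honest edges of $G$: a negative edge $x_0x_1$ means $m_G(x_0,x_1)>m_H(x_0,x_1)\ge 0$, hence $m_G(x_0,x_1)\ge 1$, so $G$ contains an edge (a loop if $x_0=x_1$) joining $x_0$ and $x_1$; likewise for $x_2x_3$. Therefore $x_0,x_1$ lie in one connected component of $G$ and $x_2,x_3$ lie in one connected component of $G$, so $\{x_0,x_1,x_2,x_3\}$ — and hence $G^a$ — meets at most two components. For $H^a$ I would run the mirror argument with the \emph{positive} edges: $m_H(x_1,x_2)>m_G(x_1,x_2)\ge 0$ forces $x_1,x_2$ into one connected component of $H$, and the positive edge $x_3x_0$ forces $x_3,x_0$ into one connected component of $H$, so $H^a$ likewise meets at most two components.

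I expect the only genuine subtlety to be definitional rather than combinatorial: one must fix the reading of ``active part'' (the union of connected components containing a vertex of positive $\Delta$-degree, equivalently an edge contributing to $\Delta$) and handle the identification $V(G)=V(H)$ induced by $\alpha$ cleanly, so that the statements ``negative $\Delta$-edges are edges of $G$'' and ``positive $\Delta$-edges are edges of $H$'' hold literally. The pure combinatorics is immediate — two edges touch at most two components, and the two equal-sign edges of an alternating $4$-walk together touch all of its vertices — and this argument dispatches the $4$-cycle, the triangular walk, and the two-loop walk of Fig.~\ref{fig:4walks} uniformly; in the homovalent (loop-free) case the two components on each side are simply those carrying the two decreasing, resp.\ increasing, bonds.
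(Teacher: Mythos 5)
Your proof is correct and rests on exactly the same observation as the paper's: negative edges of $\Delta$ are genuine edges of $G$ and positive edges of $\Delta$ are genuine edges of $H$, so the two equal-sign edges of the alternating $4$-walk pair its four vertices into at most two components on each side. The only difference is presentational: the paper enumerates the three shapes of alternating closed $4$-walks (Fig.~\ref{fig:4walks}) and checks each, whereas your pairing argument handles all three uniformly; your definitional remark about reading the ``active part'' via vertices of positive $\Delta$-degree is also sound, since $d_+(x)=d_-(x)$ guarantees every such vertex is incident to an edge of each sign.
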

\begin{proof}
  An EPD of length~$4$ corresponds to an alternating closed path of
  length~$4$.  It is easy to see that there are exactly three types of
  alternating closed walks of length~$4$, see Fig.~\ref{fig:4walks}.
  \begin{itemize}
  \item[(i)] $\Delta$ is a simple cycle of size $4$ with alternating signs.
    In particular, both the positive and the negative edges form
    a matching.
  \item[(ii)] $\Delta$ is a triangle with one vertex carrying a loop. The
    edge not incident to that vertex has the same sign as the
    loop.
  \item[(iii)] $\Delta$ is a pair of vertices connected by two edges and
    with a loop at each vertex. Both, the two edges and the two loops,
    respectively, have the same sign.
  \end{itemize}
  Clearly, the negative edges must be present in the educt graph $G$, while
  positive edges imply that the incident vertices are connected in the
  product graph $G$. In case (i), the active vertices form two pairs in
  both $G^a$ and $H^a$, and hence they fall into at most two connected
  components. In case (ii), at least one pair of vertices is connected, and
  hence the three vertices are located in at most two connected
  components. In case (iii), there are only two active vertices, and hence
  the statement is trivial.
\end{proof}

Case (i) in the proof corresponds to the common pattern of a cyclic
transition state. Case (ii) is the simplest example of an ambivalent
reaction mechanism, see e.g.\ \cite{Hendrickson:97}. A hypothetical example
for case (iii) would be: 
\begin{equation*}
\schemestart
\chemfig[atom sep=2em]{\charge{90=\|,270=\|}{S}=\charge{90=\|,270=\|}{S}} +
\chemfig[atom sep=2em]{\charge{90=\|,270=\|,0=\|}{S}}
\arrow[,0.75,.]
\chemfig[atom sep=2em]{%
  \charge{90=\|,270=\|}{S}=\charge{90=\|}{S}=\charge{90=\|,270=\|}{S}}
\schemestop
\end{equation*}
We are are not aware of a clear example of ``mechanism'' (iii), which also
does not appear in Hendrickson's classification
\cite{Hendrickson:97}. However, if at least one of the two focal atoms, say
the lower atom $x$, is connected to a third atom $z$, one can explain (iii)
as the superposition of two steps of type (ii), see
Fig.~\ref{fig:S3}. Here, the bond-order of $xz$ is decreased by~$1$ in the
first step and restored in the second step, hence the two-step mechanism
can be used as an explanation only if $x$ and $z$ are connected by a bond
in the reactants.

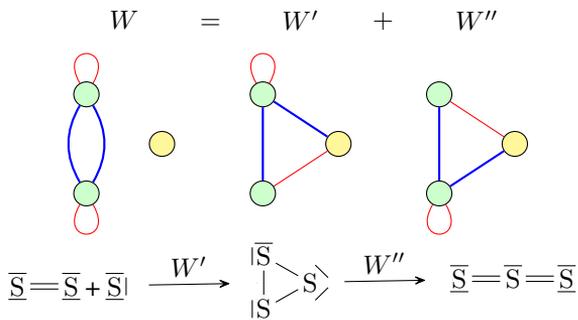
\begin{figure}
  \begin{center}
    \begin{tikzpicture}
      [scale=0.66,auto=left]
      \tikzset{v/.style={circle,draw,fill=green!20}}
\node at (0.75,6) {$W$}; \node at (4.25,6) {$W'$}; \node at (7.75,6) {$W''$};
\node at (2.25,6) {$\phantom{I}=$}; \node at (5.75,6) {$\phantom{I}+$}; 

  \node[v]  (n1) at (0,2.5) {};
  \node[v]  (n2) at (0,4.5) {};
  \node[v, fill=yellow!50]  (n3) at (1.5,3.5) {};
  \path (n1) edge[out=60,in=300,looseness=1,thick,blue] (n2);
  \path (n1) edge[out=120,in=240,looseness=1,thick,blue] (n2);  
  \path (n2) edge[out=60,in=120,looseness=9,thin,red] (n2);
  \path (n1) edge[out=240,in=300,looseness=9,thin,red] (n1);
  
  \node[v]  (n1) at (3.5,2.5) {};
  \node[v]  (n2) at (3.5,4.5) {};
  \node[v, fill=yellow!50]  (n3) at (5,3.5) {};
  \draw[thick,blue]  (n1) -- (n2);
  \path (n2) edge[out=60,in=120,looseness=9,thin,red] (n2);
  \draw[thick,blue]  (n2) -- (n3);
  \draw[thin,red]    (n1) -- (n3);
  
  \node[v]  (n1) at (7,2.5) {};
  \node[v]  (n2) at (7,4.5) {};
  \node[v, fill=yellow!50]  (n3) at (8.5,3.5) {};
  \draw[thick,blue]  (n1) -- (n2);
  \path (n1) edge[out=300,in=240,looseness=9,thin,red] (n1);
  \draw[thick,blue]  (n1) -- (n3);
  \draw[thin,red]    (n2) -- (n3);  
\end{tikzpicture}
\par\noindent

\schemestart
   \chemfig[atom sep=2em]{\charge{90=\|,270=\|}{S}=\charge{90=\|,270=\|}{S}} +
   \chemfig[atom sep=2em]{\charge{90=\|,270=\|,0=\|}{S}}
   \arrow{->[$W'$]}[1]
   \chemfig[atom sep=2em]{\charge{180=\|,270=\|}{S}*3(-\charge{45=\|,315=\|}{S}-\charge{180=\|,90=\|}{S}-)}
   \arrow{->[$W''$]}[1]
   \chemfig[atom sep=2em]{    
     \charge{90=\|,270=\|}{S}=\charge{90=\|}{S}=\charge{90=\|,270=\|}{S}}
 \schemestop
 \end{center}
  \caption{The 4-walk $W$ with 2 atoms (leftmost ITS graph) can be
    explained as a superposition of two 4-walks $W'$ and $W''$ with 3 atoms
    (middle).  This requires that at least one of the two atoms (green) is
    connected to a third atom (yellow) since in the two-step mechanism one
    bond order is first decreased and restored in the second step.  In the
    hypothetical trisulfur example, the yellow atom is one of the two
    sulfur atoms in \protect\chemfig{S_2}.  Application of the first
    alternating closed 4-walk yields the cyclic \protect\chemfig{S_3} as
    intermediate.}
\label{fig:S3}
\end{figure}

Since the connected components of $G$ and $H$ are the educt and product
molecules, respectively, Lemma~\ref{lem:4to2} can be rephrased in the
following form:
\begin{corollary}
  Let $\alpha\colon V(G)\to V(H)$ be an AAM explained by a single EPD of
  length~$4$.  Then the restriction $\alpha_a\colon V(G^a)\to V(H^a)$ is a
  combinatorially elementary reaction.
\label{cor:AAM-elem}
\end{corollary}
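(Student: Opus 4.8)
The plan is to read the statement off from Lemma~\ref{lem:4to2} together with the definition of a combinatorially elementary reaction; the one point that needs genuine care is that $\alpha$ really does restrict to a \emph{bijection} $V(G^a)\to V(H^a)$, i.e.\ that the active parts of $G$ and $H$ have the same vertex set.

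Since the AAM is explained by an EPD we may assume $V(G)=V(H)=V$ with $\alpha=\mathrm{id}$, and $\Delta=\Delta(G,H,\alpha)$ is exactly the alternating closed $4$-walk performed by that EPD; let $S$ be the (at most four) vertices it visits. All edges of $\Delta$ have both endpoints in $S$, so $G$ and $H$ agree on every vertex pair not contained in $S$. First I would check $V(G^a)=V(H^a)$. Along the walk each vertex of $S$ is entered and left by edges of opposite sign, so it is incident in $\Delta$ to a negative edge, which --- having decreasing multiplicity --- belongs to $G$, and to a positive edge, which belongs to $H$; hence every $s\in S$ lies in a component of $G$ that contains a $\Delta$-edge and in a component of $H$ that contains a $\Delta$-edge, so $S\subseteq V(G^a)\cap V(H^a)$. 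Conversely, every component counted in $G^a$ or $H^a$ contains a $\Delta$-edge and therefore meets $S$. Thus $V(G^a)$ is the set of vertices joined to $S$ in $G$ and $V(H^a)$ the set joined to $S$ in $H$; but a path from a vertex outside $S$ to $S$ uses, up to the moment it first reaches $S$, only edges with at most one endpoint in $S$, and such edges --- not being $\Delta$-edges --- are common to $G$ and $H$. Hence the two reachable sets coincide, so $V(G^a)=V(H^a)$. Because $G^a$ and $H^a$ are unions of entire connected components, $\alpha_a$ is a well-defined bijection, and it is degree-preserving since $\deg_{G^a}=\deg_G$ on $V(G^a)$ and $\deg_{H^a}=\deg_H$ on $V(H^a)$.

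Finally I would apply Lemma~\ref{lem:4to2}: $G^a$ and $H^a$ each consist of at most two connected components, which are educt molecules and product molecules, respectively. Therefore $\alpha_a$ is a degree-preserving AAM from a complex of at most two molecules to a complex of at most two molecules, i.e.\ a reaction of one of the forms $A\to B$, $A\to B+C$, $A+B\to C$, or $A+B\to C+D$, which is precisely the definition of a combinatorially elementary reaction. The only substantive work is the identity $V(G^a)=V(H^a)$; everything after that is bookkeeping on top of Lemma~\ref{lem:4to2}.
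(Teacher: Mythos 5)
Your proposal is correct and follows the same route as the paper, which presents this corollary as an immediate rephrasing of Lemma~\ref{lem:4to2} using the identification of connected components with educt/product molecules. Your additional verification that $V(G^a)=V(H^a)$ (via the observation that every vertex of the $4$-walk is incident with both a negative edge, present in $G$, and a positive edge, present in $H$, and that paths leaving the walk's vertex set use only edges common to $G$ and $H$) is a well-definedness check the paper leaves implicit, and it is carried out correctly.
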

Corollaries~\ref{cor:AAM-elem} and \ref{cor:main} imply our main result,
which can be seen as a \emph{mechanistic} refinement of
Prop.~\ref{prop:Stefan}.
\begin{theorem}
  Let $\alpha\colon V(G)\to V(H)$ be an AAM for the reaction
  $\mathfrak{C}(G)\to\mathfrak{C}(H)$. Then there is a finite sequence of
  intermediate graphs $G=G_0,G_1,\dots,G_k=H$ and AAMs $\alpha_i\colon
  V(G_{i-1})\to V(G_i)$ such that (i) each $\alpha_i$ is explained by an
  EPD of length~$4$ and hence is a combinatorially elementary reaction, and
  (ii) $\alpha = \alpha_1\circ\alpha_2\circ\dots\circ\alpha_{k}$.
  \label{thm:main}
\end{theorem}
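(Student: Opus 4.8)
The statement is essentially the conjunction of Cor.~\ref{cor:main} and Cor.~\ref{cor:AAM-elem}, so my plan is to spell out the bookkeeping that turns a sequence of length-$4$ EPDs into the chain of graphs and AAMs required here, rather than to prove anything substantially new.

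First I would reduce to the case of the identity AAM. Relabelling $H$ along $\alpha^{-1}$ (i.e.\ replacing $H$ by the isomorphic multigraph $H'$ on $V(G)$ with $m_{H'}(x,y)=m_H(\alpha(x),\alpha(y))$) makes $V(G)=V(H)=:V$ and $\alpha=\mathrm{id}_V$, and leaves the difference multigraph $\Delta\coloneqq\Delta(G,H,\alpha)$ unchanged. It therefore suffices to exhibit a chain $G=G_0,G_1,\dots,G_k=H$ of multigraphs on $V$ in which $G_i$ arises from $G_{i-1}$ by applying a single EPD of length $4$. In this situation each associated AAM $\alpha_i\colon V(G_{i-1})\to V(G_i)$ is $\mathrm{id}_V$, so the composite bijection from $V(G)$ to $V(H)$ is $\mathrm{id}_V=\alpha$, which is (ii); undoing the relabelling at the end only replaces $\alpha_k$ by the relabelling bijection, which is exactly $\alpha$, so (ii) is preserved.

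Next I would invoke Cor.~\ref{cor:main} to produce the chain, unpacking its proof. Concretely: partition $E(\Delta)$ into the edge sets of alternating closed walks (Thm.~\ref{thm:alternatingEuler} together with Cor.~\ref{cor:corr}); interpret each walk as an EPD; then apply the splitting construction of Eq.~\eqref{eq:splitW} repeatedly to replace every EPD of length $\ge 6$ by a sequence of applicable length-$4$ EPDs, using the case distinction on whether the fictitious edge $x_0x_{2i-1}$ already lies in the current graph to guarantee that every intermediate object is a bona fide multigraph and that the two halves are applied in the order in which they are applicable. Applying the resulting length-$4$ EPDs $\hat W_1,\dots,\hat W_k$ to $G$ in order yields $G=G_0,\dots,G_k=H$ with $G_i$ obtained from $G_{i-1}$ by $\hat W_i$; since applying an EPD preserves the vertex set and every vertex degree, each $\alpha_i$ is a degree-preserving AAM, and it is explained by the single length-$4$ EPD $\hat W_i$ by construction. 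Finally, Cor.~\ref{cor:AAM-elem} applied to each $\alpha_i$ shows that the restriction of $\alpha_i$ to its active parts is of one of the forms $A\to B$, $A\to B+C$, $A+B\to C$, $A+B\to C+D$; the components of $G_{i-1}$ and $G_i$ untouched by $\hat W_i$ are spectators and do not change this classification, so each $\alpha_i$ is combinatorially elementary. This is (i).

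I do not expect a genuine obstacle here: all the mathematical content already sits in Cor.~\ref{cor:main} and Cor.~\ref{cor:AAM-elem}. The only points that need care are (a) arranging the vertex identifications so that the composition identity in (ii) is literally correct, which is exactly what the reduction to $\alpha=\mathrm{id}$ above accomplishes, and (b) making explicit the convention that ``combinatorially elementary'' is understood modulo spectator components, so that a length-$4$ EPD performed inside an otherwise unchanged complex still counts as one elementary step; both are routine once stated.
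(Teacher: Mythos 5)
Your proposal is correct and follows exactly the route the paper takes: the paper derives Theorem~\ref{thm:main} directly from Corollaries~\ref{cor:main} and \ref{cor:AAM-elem} without further argument, and your write-up merely makes explicit the same bookkeeping (reduction to the identity AAM, the chain of intermediate graphs from the length-$4$ EPDs, and the spectator-component convention). No gaps; the extra care you take with the composition identity in (ii) is a reasonable elaboration of what the paper leaves implicit.
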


\section{Conclusions}

In \cite{Mueller:22a} we have shown that every reversible reaction network
has a ``chemical representation'' in terms of Lewis formulas (i.e.,
multigraphs representing molecules and AAMs for each reactions) if and only
if it is conservative.\footnote{A chemical reaction network is conservative
if its stochiometric matrix has a strictly positive left kernel
vector~\cite{Horn:72a}.}  Here we extend this representability result by
showing that that every degree-preserving bijection, and thus every AAM
$\alpha$ between Lewis formulas $G$ and $H$, can be explained by a sequence
of (edge-disjoint) alternating closed walks on the difference multigraph
$\Delta(G,H,\alpha)$. Each of these walks in turn can be obtained by a
suitably ordered sequence of electron pushing diagrams of size $4$, which
finally correspond to combinatorially elementary reactions. In particular,
the ``abstract'' Prop.~\ref{prop:Stefan} becomes an corollary of the
``mechanistic'' Thm.~\ref{thm:main}. We conclude that every conservative
reaction network admits a representation in terms of multigraphs such that
all reactions can be decomposed into elementary reactions with short
alternating closed walks, i.e., short cyclic EPDs. Most known elementary
reactions can be described by EPDs of lenght $6$, with most other cases having
length $4$ or $8$ \cite{Fujita:86b}.

The proofs given here are constructive and hence translate directly into
algorithms for constructing such a representation. First, Hierholzer's
algorithm can be used to decompose $E(\Delta)$ into a set of
(edge-disjoint) alternating closed walks $W_i$ with $O(|E(\Delta)|)$ effort
\cite{Fleischner:90}. The decomposition of the $W_i$ into EPDs of
length~$4$ can also be achieved in $O(|E(\Delta)|)$ total time. Applying
each EPD requires only constant time, and their number is again bounded by
$O(|E(\Delta)|)$. Thus the entire decomposition can be computed in linear
time.

Of course, a decomposition computed in this manner will in general not be a
chemically plausible explanation of the atom-atom map of a complex
reaction. For instance, it is possible to formally decompose the EPD of the
Diels-Alder reaction in Fig.~\ref{fig:DA} into two 4-cycles that are
chemically infeasible. The key message of this contribution, in fact, is
not to provide a method for inferring the true reaction mechanism or to
help with the inference of atom-atom maps. Instead, our results imply that
the basic \emph{combinatorial} properties of chemical reaction mechanisms
do not impose any fundamental constraints on the feasibility of
transformations as long as the preservation of atoms (and charges) is
guaranteed.

We considered here only transformations of multigraphs with loops that can
be described by relocating individual edges, i.e., electron pairs, since
these are already sufficient to handle reactions between neutral
molecules. In principle, the formalism could also be extended to
transformation that change vertex degree and thereby introduce charges at
individual atoms:
\begin{equation*}
\schemestart
\chemfig[atom sep=2em]{{{(CH_3)}_3}C-Br}
\arrow[,0.8,.]
\chemfig[atom sep=2em]{{{(CH_3)}_3}C^{\oplus}}
+
\chemfig[atom sep=2em]{Br^{\ominus}}
\schemestop
\end{equation*}
In general, the charge $\ch(x)$ at atom $x\in V$ is given as
$\ch(x)\coloneqq \val(x) - \deg(x)$, where $\val(x)$ is number of
outer-shell electrons of atom $x$. In order to account for radicals, i.e.,
unpaired electrons, multigraphs can be extended to allow semi-edges, which
in contrast to loops have only one end attached to a vertex.  Such
constructs have been studied in particular in the context of graph covers,
see e.g.\ \cite{Bok:23} and the references therein. However, a detailed
study of charged molecules and radicals is beyond the scope of this
contribution.

\subsection*{Acknowledgements}

This work was supported in part by the Novo Nordisk Foundation (grant
NNF21OC0066551 ``MATOMICS'' to CF and PFS) and the Austrian Science Fund
(FWF, grant P33218, to SM).

\bibliographystyle{adamjoucc}
\bibliography{cycletrans}
\label{sect:biblio}

\end{document}